\theoremstyle{plain}
\newtheorem{lma}{Lemma}[section]
\crefname{lma}{Lemma}{Lemmata}
\newtheorem{thm}[lma]{Theorem}
\crefname{thm}{Theorem}{Theorems}
\newtheorem{cor}[lma]{Corollary}
\crefname{cor}{Corollary}{Corollaries}
\newtheorem{prp}[lma]{Proposition}
\crefname{prp}{Proposition}{Propositions}
\theoremstyle{definition}
\crefname{pgr}{Paragraph}{Paragraphs}
\newtheorem{dfn}[lma]{Definition}
\crefname{dfn}{Definition}{Definitions}
\newtheorem{ntn}[lma]{Notation}
\newcounter{theoremintro}
\newtheorem{thmintro}[theoremintro]{Theorem}
\theoremstyle{remark}
\newtheorem{rmk}[lma]{Remark}
\crefname{rmk}{Remark}{Remarks}
\crefname{rmks}{Remarks}{Remarks}
\newtheorem{exa}[lma]{Example}
\crefname{exa}{Example}{Examples}
\newcommand{\calG}{\mathcal{G}}
\newcommand{\calH}{\mathcal{H}}
\newcommand{\Bco}{\mathcal{B}_{\mathrm{c}}^{\mathrm{o}}}
\newcommand{\ran}{\mathsf{r}}
\newcommand{\sou}{\mathsf{s}}
\newcommand{\frakA}{\mathfrak{A}}
\newcommand{\Fp}{F_{\lambda}^{p}}
\newcommand{\Fpast}{F_{\lambda}^{p, \ast}}
\newcommand{\PIMP}{\mathrm{PI}_{\mathrm{MP}}}
\newcommand{\aut}{\mathbf{Aut}}
\newcommand{\inn}{\mathbf{Inn}}
\newcommand{\out}{\mathbf{Out}}
\newcommand{\Zycles}{\mathbf{Z^{1}}}
\newcommand{\Bound}{\mathbf{B^{1}}}
\newcommand{\Hquot}{\mathbf{H^{1}}}
\newcommand{\F}{\mathsf{F}}
\newcommand{\invisom}{\mathbf{U}}
\newcommand{\ad}{\mathrm{ad}}
\newcommand{\id}{\mathrm{id}}
\newcommand{\ca}{$C^*$-algebra}
\newcommand{\ba}{Banach algebra}
\newcommand{\andSep}{\,\,\,\text{ and }\,\,\,}
\newcommand{\TT}{\mathbb{T}}
\newcommand{\CC}{\mathbb{C}}
\DeclareMathOperator{\supp}{supp}
\numberwithin{equation}{section} 
\title[Invertible isometries and automorphisms of groupoid algebras]{Topological full groups, invertible isometries, and automorphisms of groupoid algebras}
\date{}
\author{Eusebio Gardella}
\author{Mathias Palmstr{\o}m}
\author{Hannes Thiel}
\address{Eusebio Gardella,
	Department of Mathematical Sciences, Chalmers University of
	Technology and University of Gothenburg, Gothenburg SE-412 96, Sweden.}
\email{gardella@chalmers.se}
\urladdr{www.math.chalmers.se/~gardella}
\address{Mathias Palmstrøm,
	Lørenskog, Norway}
\email{mathias.palmstroem@gmail.com}
\address{Hannes~Thiel, 
	Department of Mathematical Sciences, Chalmers University of Technology and the University of
	Gothenburg, SE-412 96 Gothenburg, Sweden}
\email{hannes.thiel@chalmers.se}
\urladdr{www.hannesthiel.org}
\thanks{
	EG was partially supported by the Swedish Research Council Grant 2021-04561.
	HT was partially supported by the Knut and Alice Wallenberg Foundation (KAW 2021.0140) and by the Swedish Research Council project grant 2024-04200.	
}
\subjclass[2020]%
{Primary
47L10; 
Secondary
46L55. 
}
\keywords{Ample groupoid, topological full group, pseudofunction algebra}
\begin{document}

\begin{abstract}
We show that the topological full group of a Hausdorff ample groupoid with compact unit space coincides with the group of homotopy classes of invertible isometries in pseudofunction algebras associated with the groupoid. Moreover, if the groupoid $\calG$ is also effective, then we show that the group of (inner) automorphisms in pseudofunction algebras is a split extension of the automorphisms (respectively, the topological full group) of $\calG$ by the group of 1-cocycles (respectively, the 1-coboundaries). 
\end{abstract}	

\maketitle

\renewcommand*{\thetheoremintro}{\Alph{theoremintro}}

\section{Introduction} 
\label{sec: introduction}

Étale groupoids play a central role in several branches of mathematics, including operator algebras, topological dynamics, and combinatorial group theory.
In operator algebras, they provide a rich source of interesting examples, and several Banach algebras constructed from various combinatorial or dynamical data can be realized as algebras associated with étale groupoids, offering new perspectives on their structure.
Prominent examples of Banach algebras associated with Hausdorff étale groupoids are the $p$-pseudofunctions \cite{ChoGarThi24LpRigidity, GarLup17ReprGrpdLp, BarKwa23arX:TopFreeActionsIdlsTwBAlgCrProd, BarKwaMck25BAlgTwGpdInvSgpDisintLp}, the $I$-norm completions \cite{Hah78RegReprMsrGpds}, and, more generally, the symmetrized $p$-pseudofunctions \cite{AusOrt22GpdsHermBAlg, ElkPoo23arX:PropTBAlg, Elk25SymmPseudofctLp, SamWie20QuasiHermAmen, SamWie24ExoticCAlgsGeomGps}. 
The (symmetrized) $p$-pseudofunctions include as a special instance the reduced groupoid $\rm C^*$-algebras which have been intensely studied ever since Renault's seminal monograph \cite{Ren80GrpdApproach}. 

Beyond operator algebras, étale groupoids--especially the \emph{ample} ones, whose topology admits a basis of compact open bisections--give rise to groups with remarkable algebraic properties that can be studied through their associated \emph{topological full groups} $\mathsf{F}(\mathcal{G})$.
When $\mathcal{G}$ has compact unit space, $\mathsf{F}(\mathcal{G})$ consists of the full open bisections of $\mathcal{G}$, with set multiplication and inversion as the group operations.
This construction was introduced and systematically developed by Matui \cite{Mat06RmksTopFullGpsCMS, Mat12HomTopFullGp, Mat15TopFullGpOneSidedShift}, building on earlier ideas of Krieger \cite{Kri80DimClassHomeoGps} and of Giordano, Putnam, and Skau \cite{GioPutSka99FullGpsCMS} in topological dynamics. 
A celebrated result of Jushenko and Monod \cite{JusMon13CantorSystems} showed that topological full groups arising from Cantor minimal systems provide the first examples of finitely generated, infinite, simple, and amenable groups.

Since then, topological full groups have been the focus of extensive study; see, for instance, \cite{BriSca19CSimpleReprTopFullGps, ChoJusNek20TopFullGpsZdActions, GarTan23arX:GenThompsonGpV, Sca23DichotomyTopFullGps, Tan23arX:SteinGpsTopFullGps, Nek19SimpleGpsDynOrigin, NylOrt21KEPGpdsAHConj, NylOrt21MatuiAHConjGraphGpds, Li25AmpleGpdsTopFullGpsAlgKThyInfLoopsSpaces}. 
Beyond their algebraic interest, these groups also encode deep structural information about the underlying groupoid: for several important classes of étale groupoids, $\mathsf{F}(\mathcal{G})$ forms a complete invariant \cite[Theorem~3.10]{Mat15TopFullGpOneSidedShift}, \cite[Theorem~7.2]{NylOrt19TopFullGpsAmpleGpds}, allowing one to distinguish groupoids by their topological full groups.
This phenomenon was used by Brin \cite{Bri04HigherDimThompsonGps} to show that Thompson’s group $V$ is not isomorphic to its two-dimensional analogue, $2V$.

In our recent work \cite{GarPalThi25arX:RigidityAmpleGpds}, we showed that the $I$-norm completions, $p$-pseudo\-function algebras, and symmetrized $p$-pseudofunction algebras (with $p \neq 2$) are complete invariants for ample groupoids; see \cite[Theorem~4.7, Corollary~4.8 and Corollary~4.2]{GarPalThi25arX:RigidityAmpleGpds}. 
Here we build on that framework to show that the topological full group of an ample groupoid is naturally encoded within these algebras.
Recall that for a unital \ba\ $\frakA$, an invertible element $\mathfrak{u} \in \frakA$ is called an \emph{invertible isometry} if $\| \mathfrak{u} \| = \| \mathfrak{u}^{-1} \| = 1$.
The set of all invertible isometries forms a group, denoted $\invisom(\frakA)$, and we write $\invisom(\frakA)_0$ for its connected component of the identity.

The following is our first main result; 
see \cref{prp:identifying the quotient of invertible isometries with the topological full group}.

\begin{thmintro}
Let $\calG$ be a Hausdorff ample groupoid with compact unit space. 
Let $\frakA(\calG)$ denote either $L^I (\calG)$, $\Fpast(\calG)$, or $\Fp(\calG)$, with $p \in [1, \infty) \setminus \{2\}$. 
Then $C(\calG^{(0)},\mathbb{T})$ is the connected component of $\invisom(\frakA(\calG))$ and there is a canonical, split-exact sequence 
\[
\begin{tikzcd}
1 \arrow{r}{} & C(\calG^{(0)},\mathbb{T}) \arrow{r} & \invisom(\frakA(\calG)) \arrow{r}{\sigma} & \F(\calG) \arrow{r}{} & 1.
\end{tikzcd}
\]
In particular, there are canonical group isomorphisms
\[
\invisom(\frakA(\calG))\cong C(\calG^{(0)},\mathbb{T})\rtimes \F(\calG), \andSep
\mathsf{F}(\calG)\cong \invisom(\frakA(\calG))/\invisom(\frakA(\calG))_0.
\]  
\end{thmintro}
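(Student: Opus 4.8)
The plan is to reduce the entire statement to a single normal-form result for invertible isometries, and the establishment of that normal form is the step I expect to be the main obstacle. Concretely, I would first prove that every $\mathfrak{u} \in \invisom(\frakA(\calG))$ factors uniquely as $\mathfrak{u} = f \cdot \mathbbm{1}_U$, where $U$ is a full open bisection (so $U \in \F(\calG)$), $\mathbbm{1}_U$ is its indicator, and $f \in C(\calG^{(0)}, \TT)$. Existence of this factorization is where the hypotheses genuinely enter. For $\Fp(\calG)$ with $p \neq 2$ I expect it to follow from Lamperti's theorem: in the canonical $L^p$-representation the diagonal $C(\calG^{(0)})$ acts by multiplication operators, and an invertible isometry, being disjointness-preserving for $p \neq 2$, must normalize this multiplication algebra; hence $\mathfrak{u}$ normalizes the diagonal, and the Cartan/Weyl-groupoid description of normalizers (which I would record beforehand) forces $\mathfrak{u}$ to be supported on an open bisection $U$, with invertibility of both $\mathfrak{u}$ and $\mathfrak{u}^{-1}$ making $U$ full. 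For $\LI(\calG)$ and $\Fpast(\calG)$ the same conclusion should be extracted from the rigidity machinery of \cite{GarPalThi25arX:RigidityAmpleGpds}, which already identifies the diagonal and its normalizers isometrically. Uniqueness of the pair $(f, U)$ is then immediate from $\mathbbm{1}_U \mathbbm{1}_{U^{-1}} = \mathbbm{1}_{\calG^{(0)}} = 1$ together with faithfulness of the diagonal.

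Granting the normal form, the algebraic content is routine. I would define $\sigma \colon \invisom(\frakA(\calG)) \to \F(\calG)$ by $\sigma(f \mathbbm{1}_U) = U$. This is a homomorphism because $\mathbbm{1}_U g \mathbbm{1}_U^{-1} \in C(\calG^{(0)}, \TT)$, conjugation by $\mathbbm{1}_U$ implementing the automorphism of $C(\calG^{(0)})$ attached to the homeomorphism of $\calG^{(0)}$ associated with $U$, while $\mathbbm{1}_U \mathbbm{1}_V = \mathbbm{1}_{UV}$; thus $(f\mathbbm{1}_U)(g\mathbbm{1}_V)$ again has normal form with bisection $UV$. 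It is surjective since each full bisection $U$ yields $\mathbbm{1}_U \in \invisom(\frakA(\calG))$ with $\sigma(\mathbbm{1}_U) = U$, and its kernel consists exactly of the elements with $U = \calG^{(0)}$, that is, of $C(\calG^{(0)}, \TT)$. The assignment $U \mapsto \mathbbm{1}_U$ is a homomorphic section of $\sigma$, so the sequence is split exact and $\invisom(\frakA(\calG)) \cong C(\calG^{(0)}, \TT) \rtimes \F(\calG)$ for the action just described.

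For the topological statement I argue by two inclusions. First, $C(\calG^{(0)}, \TT)$ is path-connected and hence contained in $\invisom(\frakA(\calG))_0$: since $\calG$ is ample with compact unit space, $\calG^{(0)}$ is a compact totally disconnected space, so any $f \in C(\calG^{(0)}, \TT)$ takes values in finitely many proper open arcs over a suitable clopen partition, and is therefore homotopic within $C(\calG^{(0)}, \TT)$ first to a locally constant map and then to the constant $1$; the resulting homotopy is a path of invertible isometries. Second, $\invisom(\frakA(\calG))_0 \subseteq C(\calG^{(0)}, \TT)$: it suffices to show that $\ker \sigma = C(\calG^{(0)}, \TT)$ is open, equivalently that $\sigma$ is continuous for the discrete topology on $\F(\calG)$. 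Here I would use the canonical contractive restriction map $E \colon \frakA(\calG) \to C(\calG^{(0)})$, available for each of the three algebras. If $\mathfrak{w} = h \mathbbm{1}_W$ is an invertible isometry with $W \neq \calG^{(0)}$, then, $W$ being a full bisection, the clopen set $\calG^{(0)} \setminus W$ is nonempty, and $E(\mathfrak{w} - 1) = h\mathbbm{1}_{W \cap \calG^{(0)}} - 1$ equals $-1$ there, so $\|\mathfrak{w} - 1\| \geq \|E(\mathfrak{w} - 1)\|_\infty \geq 1$. Applying this to $\mathfrak{w} = \mathfrak{v}^{-1}\mathfrak{u}$, which satisfies $\sigma(\mathfrak{v}^{-1}\mathfrak{u}) = \sigma(\mathfrak{v})^{-1}\sigma(\mathfrak{u})$, and using $\|\mathfrak{v}^{-1}\| = 1$, yields the uniform bound $\|\mathfrak{u} - \mathfrak{v}\| \geq \|\mathfrak{v}^{-1}\mathfrak{u} - 1\| \geq 1$ whenever $\sigma(\mathfrak{u}) \neq \sigma(\mathfrak{v})$. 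Hence $\sigma$ is locally constant and $\invisom(\frakA(\calG))_0 \subseteq \ker \sigma$. Combining the two inclusions gives $\invisom(\frakA(\calG))_0 = C(\calG^{(0)}, \TT)$, and therefore $\F(\calG) \cong \invisom(\frakA(\calG))/\invisom(\frakA(\calG))_0$, completing the proof.
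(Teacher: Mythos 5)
Your proposal is correct, and its skeleton (normal form $\mathfrak{u}=f\ast\mathbbm{1}_U$, the support homomorphism $\sigma$ with section $U\mapsto\mathbbm{1}_U$, identification of the connected component) matches the paper's, but both key steps are argued by genuinely different means. For the normal form, the paper does not invoke Lamperti's theorem and the normalizer picture directly; it instead shows that $\invisom(\frakA(\calG))$ sits inside the inverse semigroup $\PIMP(\frakA(\calG))$ of Moore--Penrose partial isometries (with $\mathfrak{a}^\dag=\mathfrak{a}^{-1}$) and then quotes the description of those elements from \cite{GarPalThi25arX:RigidityAmpleGpds} as functions supported on compact open bisections and $\TT$-valued there; this is the same underlying spatial-isometry technology you gesture at, packaged so that $L^I$, $F^{p,\ast}_\lambda$ and $F^p_\lambda$ are handled uniformly, whereas your sketch leaves the Lamperti-to-normalizer-to-bisection chain for $F^p_\lambda$ as the acknowledged hard part. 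For the connected component, the paper proves a homotopy-rigidity claim (a path in $\PIMP$ starting at an invertible isometry stays in $\invisom$, by subdividing and using $\mathfrak{a}_t=\mathfrak{a}_t\mathfrak{a}_t^\dag\mathfrak{a}_t$) and then cites \cite[Lemma~3.12]{GarPalThi25arX:RigidityAmpleGpds}; your argument is more self-contained and arguably cleaner: $C(\calG^{(0)},\TT)$ is path-connected because $\calG^{(0)}$ is compact and totally disconnected, and $\sigma$ is locally constant because the contractive restriction map $E$ onto $C(\calG^{(0)})$ detects a uniform gap $\|\mathfrak{u}-\mathfrak{v}\|\geq\|E(\mathfrak{v}^{-1}\mathfrak{u})-1\|_\infty\geq 1$ whenever $\sigma(\mathfrak{u})\neq\sigma(\mathfrak{v})$ (using that a full bisection $W\neq\calG^{(0)}$ misses a nonempty clopen subset of the unit space). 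The quantitative separation of the fibers of $\sigma$ that your argument yields is not made explicit in the paper and is a worthwhile observation; the price is that you must verify contractivity of $E$ for each of the three norms, which is routine.
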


We now turn to automorphism groups. 
Given a Banach algebra $\frakA$, an \emph{automorphism} of $\frakA$ will always mean an isometric algebra automorphism, and we denote the group of all such automorphisms by $\aut(\frakA)$. 
An \emph{inner automorphism} is an automorphism of the form $\ad(\mathfrak{u})$ for an invertible isometry $\mathfrak{u} \in \frakA$. 
The collection of inner automorphisms forms a group that we denote by $\inn(\frakA)$. 
We refer the reader to \cref{df:cocycles} for the 
definitions of the groups $\Zycles(\calG,\TT), \Bound(\calG,\TT)$ and $\Hquot(\calG,\TT)$. The following is our second main result; see \cref{prp:exact sequences of automorphism groups}.

\begin{thmintro}
	Let $\calG$ be a Hausdorff ample groupoid. Then there is a canonical, split-exact sequence 
	\[
	\begin{tikzcd}
		1 \arrow{r}{} & \Zycles (\calG, \TT) \arrow{r}{\Gamma} & \aut(\frakA(\calG)) \arrow{r}{\Omega} & \aut(\calG) \arrow{r}{} & 1. 
	\end{tikzcd}
	\]
If, in addition, $\calG$ is effective and has compact unit space, then there are induced split exact sequences
	\[
	\begin{tikzcd}
		1 \arrow{r}{} & \Bound (\calG, \TT) \arrow{r}{\Gamma} & \inn(\frakA(\calG)) \arrow{r}{\Omega} & \F(\calG) \arrow{r}{} & 1 ,
	\end{tikzcd}
	\]
	and 
	\[
	\begin{tikzcd}
		1 \arrow{r}{} & \Hquot (\calG, \TT) \arrow{r}{\overline{\Gamma}} & \out(\frakA(\calG)) \arrow{r}{\overline{\Omega}} & \aut(\calG)/ \F(\calG) \arrow{r}{} & 1.
	\end{tikzcd}
	\]
\end{thmintro}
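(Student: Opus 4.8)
The plan is to establish the top sequence first, for an arbitrary Hausdorff ample groupoid $\calG$, and then to extract the second and third sequences by restriction and by passing to quotients. The homomorphism $\Gamma$ is the \emph{cocycle deformation}: given a continuous cocycle $c \in \Zycles(\calG, \TT)$, define $\Gamma(c)$ on the dense subalgebra of compactly supported functions by $(\Gamma(c)\xi)(g) = c(g)\xi(g)$. A direct convolution computation shows $\Gamma(c)$ is multiplicative \emph{precisely} because of the cocycle identity $c(gh) = c(g)c(h)$, that $\Gamma(c_1 c_2) = \Gamma(c_1)\Gamma(c_2)$, and that $\Gamma(c)$ is isometric since $|c| \equiv 1$ and the pseudofunction norms are invariant under pointwise multiplication by a unimodular cocycle; evaluating on functions supported near a point gives injectivity. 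The homomorphism $\Omega$ sends an isometric automorphism $\alpha$ to its underlying groupoid automorphism: by the rigidity and reconstruction results of \cite{GarPalThi25arX:RigidityAmpleGpds}, $\alpha$ preserves the canonical diagonal $C(\calG^{(0)})$ and permutes the normalizers supported on compact open bisections, and the induced transformation of the reconstructed groupoid is a topological groupoid automorphism $\Omega(\alpha) \in \aut(\calG)$.

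For the first sequence, the inclusion $\im\Gamma \subseteq \ker\Omega$ is immediate, as cocycle deformations fix supports and the diagonal. Conversely, if $\Omega(\alpha) = \id_{\calG}$ then $\alpha$ fixes the diagonal and the support of every bisection, so it scales each normalizer by a unimodular function; multiplicativity of $\alpha$ forces these phases to assemble into a continuous cocycle $c$ with $\alpha = \Gamma(c)$, yielding $\ker\Omega \subseteq \im\Gamma$. A splitting (hence surjectivity) is given by the pushforward $s(\phi)$, $(s(\phi)\xi)(g) = \xi(\phi^{-1}(g))$, which is an isometric automorphism by functoriality of the construction and satisfies $\Omega \circ s = \id_{\aut(\calG)}$. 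This proves the first split-exact sequence.

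Now assume $\calG$ is effective with compact unit space, so that \cref{prp:identifying the quotient of invertible isometries with the topological full group} applies and every invertible isometry factors, modulo $C(\calG^{(0)}, \TT)$, through a unique $U = \sigma(\mathfrak{u}) \in \F(\calG)$. Writing $\mathfrak{u} = f \cdot u_U$ with $f \in C(\calG^{(0)}, \TT)$ and $u_U$ the invertible isometry implementing $U$, one computes that $\Omega(\ad(\mathfrak{u})) = \Omega(\ad(u_U))$ is conjugation by the full bisection $U$, that is, the image of $U$ under the canonical homomorphism $\F(\calG) \to \aut(\calG)$. Effectiveness makes this homomorphism injective (a full bisection acting trivially by conjugation lies in the interior of the isotropy, hence equals $\calG^{(0)}$), so we identify $\F(\calG)$ with its image and get $\Omega(\inn(\frakA(\calG))) = \F(\calG)$. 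The kernel of $\Omega|_{\inn}$ consists of the $\ad(f)$ with $f \in C(\calG^{(0)}, \TT)$, and since $\ad(f) = \Gamma(\partial f)$ with $(\partial f)(g) = f(\ran(g))\overline{f(\sou(g))}$, this kernel equals $\Gamma(\Bound(\calG, \TT))$. Restricting $s$ to $\F(\calG)$ gives $s(U) = \ad(u_U) \in \inn(\frakA(\calG))$, a splitting, and the second sequence follows.

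Finally, the inclusions $\Bound \hookrightarrow \Zycles$, $\inn \hookrightarrow \aut$, and $\F(\calG) \hookrightarrow \aut(\calG)$ assemble into a morphism of short exact sequences; each sub is normal in its ambient group ($\inn(\frakA(\calG)) \trianglelefteq \aut(\frakA(\calG))$ via $\alpha\,\ad(\mathfrak{u})\,\alpha^{-1} = \ad(\alpha(\mathfrak{u}))$ and $\F(\calG) \trianglelefteq \aut(\calG)$ via $\psi \circ \mathrm{conj}_U \circ \psi^{-1} = \mathrm{conj}_{\psi(U)}$, the coboundaries being automatically normal in the abelian $\Zycles$), so the nine lemma for the induced cokernels produces the split-exact sequence with $\Hquot(\calG, \TT)$, $\out(\frakA(\calG))$, and $\aut(\calG)/\F(\calG)$, the maps $\overline{\Gamma}, \overline{\Omega}$ and the splitting all descending from the first sequence. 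The main obstacle is the first sequence, specifically the well-definedness of $\Omega$ and the identity $\ker\Omega = \im\Gamma$: this is where the rigidity theorem of \cite{GarPalThi25arX:RigidityAmpleGpds} does the heavy lifting, forcing every isometric automorphism to be spatially implemented by a groupoid automorphism up to a unimodular cocycle; the subsidiary analytic point is the gauge-invariance of the pseudofunction norms needed to know that $\Gamma(c)$ is isometric.
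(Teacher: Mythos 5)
Your proposal is correct and follows essentially the same route as the paper: define $\Gamma$ by pointwise multiplication by the cocycle, define $\Omega$ via the intrinsically characterized diagonal and the normalizers supported on compact open bisections, prove exactness of the first sequence by assembling the phases of $\alpha(\mathbbm{1}_A)$ into a cocycle, split it by the pushforward $\theta\mapsto\alpha_\theta$, identify $\Omega(\ad(f\ast\mathbbm{1}_B))=\ad(B)$ and $\Gamma(\widehat{f})=\ad(f)$ using effectiveness for the kernel computation, and deduce the third sequence from the first two. The only cosmetic difference is that you package the final step as an application of the nine lemma to the morphism of short exact sequences, where the paper carries out the corresponding diagram chase (including the injectivity of $\overline{\Gamma}$, i.e.\ $\Gamma(\Zycles(\calG,\TT))\cap\inn(\frakA(\calG))=\Gamma(\Bound(\calG,\TT))$) by hand.
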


Our methods are very conceptual and allow us to handle all the algebras $L^I (\calG)$, $\Fpast(\calG)$, and $\Fp(\calG)$, with $p \in [1, \infty) \setminus \{2\}$. Indeed,
the arguments developed here rely on a careful identification of the compact, open bisections of an ample groupoid with the spatial partial isometries of $\frakA(\calG)$, which is possible when $\frakA(\calG)$ is any of the algebras mentioned above.

\section{Topological full groups and invertible isometries} 

Throughout, we shall freely use the notation of \cite{GarPalThi25arX:RigidityAmpleGpds}, and unless otherwise specified, $\calG$ will always denote a Hausdorff étale groupoid with compact unit space. 
Our goal in this section is to obtain a generalization, to the setting of Hausdorff \'etale groupoids, of \cite[Corollary~4.16]{GarThi22IsoConv}, where it is shown that if $G$ is a discrete group and $p\neq 2$, then there are natural group isomorphisms
\[\invisom(F^p_\lambda(G))\cong \mathbb{T}\times G \ \ \mbox{ and } \ \ G\cong \pi_0(\invisom(F^p_\lambda(G))).\]

We begin by introducing some standard terminology.

\begin{dfn} 
\label{dfn: topological full groups}	
Let $\calG$ be a Hausdorff étale groupoid with compact unit space. The \emph{topological full group} $\F(\calG)$ is the group of open bisections $B \subseteq \calG$ satisfying $\ran(B) = \sou(B) = \calG^{(0)}$, with multiplication given by
\[
AB = \big\{ ab \in \mathcal{G} \colon a \in A, b \in B \mbox{ and } \sou(a) = \ran(b) \big\},
\]
and inversion $A^{-1} = \{a^{-1}\in \mathcal{G} \colon a \in A \}$, for $A, B \in \F(\calG)$.
	
Every element $B \in \F(\calG)$ determines a homeomorphism $\rho_B$ of $\calG^{(0)}$ given by $$\rho_B (u) = \ran(Bu) = \big(\ran|_{B} \circ \sou|_{B}^{-1}\big) (u) ,$$ for $u \in \calG^{(0)}$. 
Thus, we obtain a group homomorphism $\rho\colon \F(\calG) \to \mathrm{Homeo}(\calG^{(0)})$, which is injective if $\calG$ is effective (see \cite{Nek19SimpleGpsDynOrigin}). 
Through this homomorphism, we regard $\F(\calG)$ as acting on $\calG^{(0)}$ via homeomorphisms.
\end{dfn}

\begin{rmk} 
\label{rmk: explanation of different terminology for TFGs}
In \cite{Mat12HomTopFullGp}, Matui defined the topological full group of $\calG$ as the image of $\F(\calG)$ under the above homomorphism, but here we use the definition of Nekrashevych from \cite[Definition 2.6]{Nek19SimpleGpsDynOrigin} which is more suitable for our purposes. However, \cref{dfn: topological full groups} coincides with Matui's definition when the étale groupoid is effective.
\end{rmk}

Notice that when $\calG$ has compact unit space and $B \in \F(\calG)$, then $B$ is not only open, but also compact, hence the indicator function $\mathbbm{1}_B$ is continuous and supported on the compact open bisection $B$, and it satisfies 
\[
1 = \|\mathbbm{1}_B\|_\infty = \|\mathbbm{1}_B\|_{I}.
\]

By $\frakA(\calG)$ we shall mean any one of the Banach algebras $L^I (\calG)$, $\Fpast(\calG)$, or $\Fp(\calG)$, with $p \in [1, \infty) \setminus \{2\}$; see \cite[Paragraph~3.2, Paragraph~3.3 and Definition~4.3]{GarPalThi25arX:RigidityAmpleGpds}, respectively, for their definitions. 
We have, 
\[ 
\mathbbm{1}_{\calG^{(0)}} = \mathbbm{1}_B \mathbbm{1}_{B^{-1}} = \mathbbm{1}_{B^{-1}} \mathbbm{1}_{B},
\] 
so that $\mathbbm{1}_B \in \invisom(\frakA(\calG))$. In addition, if $f \in C(\calG^{(0)} , \TT)$, then $f \ast \mathbbm{1}_B$ belongs to $\invisom(\frakA(\calG))$ and its inverse is given by \[
\mathbbm{1}_{B^{-1}} \ast \overline{f}=(\overline{f} \circ\rho_B)\ast\mathbbm{1}_{B^{-1}} \in \invisom(\frakA(\calG)).
\] 

In fact, all invertible isometries in $\frakA(\calG)$ are of this form; we isolate this fact for future use, together with other convenient descriptions of these elements.
We will write $\invisom(\frakA(\calG))_0$ for the invertible isometries which are homotopic to the identity within $\invisom(\frakA(\calG))$. 

\begin{prp} 
\label{prp:UFpGpd}
Let $\calG$ be a Hausdorff étale groupoid with compact unit space. Then
\[
\invisom(\frakA(\calG)) = \big\{ \mathfrak{a}\in \frakA(\calG) \colon \supp(\mathfrak{a}) \in \F(\calG) \text{ and } \mathfrak{a} \text{ is } \TT\text{-valued on } \supp(\mathfrak{a}) \big\}.
\]
Moreover, for every $\mathfrak{a}\in \invisom(\frakA(\calG))$ there are unique
$f\in C(\calG^{(0)},\mathbb{T})$ and $B\in \F(\calG)$ such that $\mathfrak{a}=f\ast \mathbbm{1}_B$.

Finally, if $\calG$ is ample, then $\invisom(\frakA(\calG))_0=\invisom(C(\calG^{(0)}))=C(\calG^{(0)},\mathbb{T})$.
\end{prp}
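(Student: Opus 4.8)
The containment $\supseteq$ is immediate from the computations preceding the statement. Indeed, if $\mathfrak{a}$ lies in the right-hand set, put $B := \supp(\mathfrak{a}) \in \F(\calG)$ and $f := \mathfrak{a} \circ (\ran|_B)^{-1}$; since $B$ is a bisection and $\mathfrak{a}$ is $\TT$-valued on $B$, we get $f \in C(\calG^{(0)}, \TT)$ and $\mathfrak{a} = f \ast \mathbbm{1}_B$, which was already shown to be an invertible isometry. The whole difficulty is the reverse containment, a spatiality statement: an \emph{abstract} invertible isometry must be supported on a full bisection and be unimodular there. I would deduce this from the rigidity machinery of \cite{GarPalThi25arX:RigidityAmpleGpds}, which is exactly the groupoid analogue of the argument behind \cite{GarThi22IsoConv}.

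For $\subseteq$ the plan is to realize $\frakA(\calG)$ on its defining representation, so that the norm of $\mathfrak{a}$ is an operator norm and an invertible isometry acts as a bijective isometry of the representation space. For $\Fp(\calG)$ with $p \neq 2$ this is a surjective isometry of an $L^p$-space, and the Banach--Lamperti theorem forces it to be a weighted composition operator; the requirement that $\mathfrak{a}$ lie in $\frakA(\calG)$ then pins the underlying transformation to one implemented by a compact open bisection $B$ and the weight to a unimodular continuous function, i.e. $\mathfrak{a}$ is a \emph{spatial} partial isometry. The cases $\LI(\calG)$ and $\Fpast(\calG)$ are routed through the same conclusion via the identification of spatial partial isometries in \cite{GarPalThi25arX:RigidityAmpleGpds}. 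Invoking it, $\mathfrak{a} = f \ast \mathbbm{1}_B$ with $B$ a compact open bisection and $f$ unimodular on $\ran(B)$. Since $\mathfrak{a}$ is invertible and $\mathbbm{1}_B \mathbbm{1}_{B^{-1}} = \mathbbm{1}_{\ran(B)}$, $\mathbbm{1}_{B^{-1}} \mathbbm{1}_B = \mathbbm{1}_{\sou(B)}$, the identity $\mathbbm{1}_{\calG^{(0)}} = \mathfrak{a} \ast \mathfrak{a}^{-1} = \mathfrak{a}^{-1} \ast \mathfrak{a}$ forces $\ran(B) = \sou(B) = \calG^{(0)}$, so $B \in \F(\calG)$. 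This proves the characterization; uniqueness of the factorization is then automatic, since $B$ is forced to equal $\supp(\mathfrak{a})$ ($f$ being nowhere zero) and $f$ is forced to equal $\mathfrak{a} \circ (\ran|_B)^{-1}$, which is continuous and $\TT$-valued.

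For the final, ample, claim I argue in three steps. First, $\invisom(C(\calG^{(0)})) = C(\calG^{(0)}, \TT)$ is elementary: $\|g\|_\infty = \|g^{-1}\|_\infty = 1$ forces $|g| \equiv 1$. Second, $C(\calG^{(0)}, \TT) \subseteq \invisom(\frakA(\calG))_0$, because ampleness makes $\calG^{(0)}$ compact and totally disconnected, so every $f \in C(\calG^{(0)}, \TT)$ has a continuous logarithm $f = e^{ig}$ with $g \colon \calG^{(0)} \to \RR$, and $t \mapsto e^{itg}$ is a path in $C(\calG^{(0)}, \TT)$ from $\mathbbm{1}_{\calG^{(0)}}$ to $f$ that is norm-continuous in $\frakA(\calG)$, as $C(\calG^{(0)})$ embeds isometrically. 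Third, for the reverse inclusion I show that the support map $\sigma \colon \invisom(\frakA(\calG)) \to \F(\calG)$, $\mathfrak{a} \mapsto \supp(\mathfrak{a})$, is locally constant. Given full bisections $B \neq B'$, fullness forces $B \not\subseteq B'$, so (using ampleness) I choose a compact open bisection $V \subseteq B \setminus B'$; since $v^{-1} b \in \calG^{(0)}$ only when $b = v$, the canonical contractive restriction $E \colon \frakA(\calG) \to C(\calG^{(0)})$ of \cite{GarPalThi25arX:RigidityAmpleGpds} sends $\mathbbm{1}_{V^{-1}} \ast (f \ast \mathbbm{1}_B)$ to a function of modulus $1$ on $\sou(V)$, while it annihilates $\mathbbm{1}_{V^{-1}} \ast (f' \ast \mathbbm{1}_{B'})$ (as $V \cap B' = \varnothing$). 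Hence, for all $f, f' \in C(\calG^{(0)}, \TT)$,
\[
\|f \ast \mathbbm{1}_B - f' \ast \mathbbm{1}_{B'}\| \ge \big\| E\big(\mathbbm{1}_{V^{-1}} \ast (f \ast \mathbbm{1}_B - f' \ast \mathbbm{1}_{B'})\big) \big\|_\infty = 1 .
\]
Thus the fibres of $\sigma$ are mutually clopen, $\sigma$ is constant on path components, and the component of $\mathbbm{1}_{\calG^{(0)}}$ lands in $\sigma^{-1}(\calG^{(0)}) = C(\calG^{(0)}, \TT)$. Combining the three steps gives $\invisom(\frakA(\calG))_0 = C(\calG^{(0)}, \TT)$.

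The main obstacle is the spatiality step for $\subseteq$: passing from an abstract invertible isometry to a weighted full bisection. This is where $p \neq 2$ (via Lamperti) is essential and where the three algebras $\LI(\calG)$, $\Fpast(\calG)$, $\Fp(\calG)$ must be handled on the same footing, which is precisely what the spatial-partial-isometry identification of \cite{GarPalThi25arX:RigidityAmpleGpds} accomplishes; everything else in the proposition is a clean consequence.
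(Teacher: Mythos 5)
Your argument is correct, and for the central characterization it ends up on the same road as the paper: both proofs outsource the hard spatiality step to the identification of partial isometries in \cite[Proposition~3.11]{GarPalThi25arX:RigidityAmpleGpds}. Two remarks on that step. First, your Lamperti route for $\Fp(\calG)$ is only a heuristic as written: passing from the measurable weighted composition operator that Lamperti provides to the topological data (a compact open bisection and a continuous unimodular weight) is precisely the nontrivial content, and you do not supply it; moreover Lamperti is unavailable for $\LI(\calG)$ and $\Fpast(\calG)$, so ``routed through the same conclusion'' needs an explicit bridge. The paper's bridge is one line and uniform in all three algebras, and you should state it: every invertible isometry $\mathfrak{a}$ is a Moore--Penrose partial isometry with $\mathfrak{a}^{\dag}=\mathfrak{a}^{-1}$, since the idempotents $\mathfrak{a}\mathfrak{a}^{-1}=\mathfrak{a}^{-1}\mathfrak{a}=1$ are hermitian; after that, \cite[Proposition~3.11]{GarPalThi25arX:RigidityAmpleGpds} applies verbatim to $\PIMP(\frakA(\calG))$, and fullness of the support follows from invertibility exactly as you argue. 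With this substitution, your treatment of the displayed identity and of existence and uniqueness of the factorization $\mathfrak{a}=f\ast\mathbbm{1}_B$ coincides with the paper's proof.

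Where you genuinely diverge is the final claim. The paper disposes of $\invisom(\frakA(\calG))_0=C(\calG^{(0)},\TT)$ by citing \cite[Lemma~3.12]{GarPalThi25arX:RigidityAmpleGpds}, whereas you reprove it from scratch, and correctly: the continuous-logarithm argument on the compact, totally disconnected space $\calG^{(0)}$ gives $C(\calG^{(0)},\TT)\subseteq\invisom(\frakA(\calG))_0$, and your separation estimate $\|f\ast\mathbbm{1}_B-f'\ast\mathbbm{1}_{B'}\|\ge 1$ for $B\ne B'$ in $\F(\calG)$ --- obtained by cutting with $\mathbbm{1}_{V^{-1}}$ for a nonempty compact open bisection $V\subseteq B\setminus B'$ and applying the contractive restriction to the core --- shows the support map is locally constant, hence constant on the path component of the identity, which by the first part lands in $C(\calG^{(0)},\TT)$. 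Do record the two facts this quietly uses: $B\setminus B'$ is open because compact sets are closed in the Hausdorff groupoid $\calG$ (and it is nonempty since two distinct full bisections cannot be nested), and the restriction $E$ to the core is contractive because the norm of $\frakA(\calG)$ dominates the sup-norm. Your route buys self-containment --- it is in effect a proof of the cited lemma, making visible the quantitative separation of the fibres of $\sigma$ --- at the cost of length; the paper's citation is shorter but opaque on this point.
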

\begin{proof}
By \cite[Proposition~4.6]{GarPalThi25arX:RigidityAmpleGpds} and \cite[Proposition~2.21]{GarPalThi25arX:RigidityAmpleGpds}, hermitian idempotents in $\frakA(\calG)$ are ultrahermitian and commute (see \cite[Paragraph~2.3 and Definition~2.15]{GarPalThi25arX:RigidityAmpleGpds}). 
Thus, it follows from \cite[Theorem~2.17]{GarPalThi25arX:RigidityAmpleGpds} that the Moore-Penrose partial isometries, $\PIMP(\frakA(\calG))$, in $\frakA(\calG)$ form an inverse semigroup under multiplication. 
It is easy to see that $\invisom(\frakA(\calG))$ is a subgroup of $\PIMP(\frakA(\calG))$ with $\mathfrak{a}^\dag = \mathfrak{a}^{-1}$, for all $\mathfrak{a} \in \invisom(\frakA(\calG))$.  For the converse inclusion, it follows from the description of the elements in $\PIMP(\frakA(\calG))$ provided by \cite[Proposition~3.11]{GarPalThi25arX:RigidityAmpleGpds} that if $\mathfrak{a} \in \invisom(\frakA(\calG))$, then $\supp(\mathfrak{a})$ belongs to $\F(\calG)$ and $\mathfrak{a}$ is $\TT$-valued on its support. This proves the displayed identity in 
the proposition. 

For the second assertion, let  $\mathfrak{a}\in \invisom(\frakA(\calG))$ be given.
Define $f_{\mathfrak{a}}\in C(\calG^{(0)},\mathbb{T})$ by $f_\mathfrak{a} (u) = \mathfrak{a}(\ran|_{\supp(\mathfrak{a})}^{-1}(u))$ for all $u \in \calG^{(0)}$.
We claim that $\mathfrak{a} = f_\mathfrak{a} \ast \mathbbm{1}_{\supp(\mathfrak{a})}$.
In order to check this identity, we regard both elements as functions $\calG\to\mathbb{C}$ and check that they agree pointwise. Given $x\in\calG$, we have 
\begin{align*}
\big(f_{\mathfrak{a}}\ast \mathbbm{1}_{\supp(\mathfrak{a})}\big)(x)
&= \sum_{yz=x} f_{\mathfrak{a}}(y) \mathbbm{1}_{\supp(\mathfrak{a})}(z).
\end{align*}
Since $f_{\mathfrak{a}}$ is supported on $\calG^{(0)}$, the above expression is zero 
unless $y$ belongs to $\calG^{(0)}$, which forces $y=\mathsf{r}(x)$ and $z=x$. 
Thus, the sum reduces to zero if $x\notin\supp(\mathfrak{a})$, and $\mathfrak{a}(y\supp(\mathfrak{a}))=\mathfrak{a}(x)$ otherwise. 
In other words, $\big(f_{\mathfrak{a}}\ast \mathbbm{1}_{\supp(\mathfrak{a})}\big)(x)=\mathfrak{a}(x)$, proving the claim. 

It follows from the above that every invertible isometry in $\frakA(\calG)$ has the form $f\ast \mathbbm{1}_B$ for some $f\in C(\calG^{(0)},\mathbb{T})$ and some $B\in \F(\calG)$.
Uniqueness of $B$ follows from the fact that $B=\supp(f\ast \mathbbm{1}_B)$, while uniqueness of $f$ follows from the fact that $\mathbbm{1}_B \mathbbm{1}_{B^{-1}}=\mathbbm{1}_{\calG^{(0)}}$.

The last part of the statement, in the case that $\calG$ is ample, follows from the first part together with \cite[Lemma~3.12]{GarPalThi25arX:RigidityAmpleGpds}.
\end{proof}

We want to use the above characterization of invertible isometries in pseudofunction algebras to show that $\mathsf{F}(\calG)$ can be obtained, in a canonical way, from $\frakA(\calG)$. We need some preparation first.

\begin{dfn}
Let $\calG$ be a Hausdorff ample groupoid with compact unit space. 
We define the support map $\sigma\colon  \invisom(\frakA(\calG))\to \F(\calG)$ by setting $\sigma(\mathfrak{a})=\supp(\mathfrak{a})$ for all $\mathfrak{a}\in\frakA(\calG)$.
\end{dfn}

Note that the range of $\sigma$ is really contained in $\F(\calG)$ by \cref{prp:UFpGpd}. 

\begin{lma}
\label{lma:sigma}
Let $\calG$ be a Hausdorff ample groupoid with compact unit space. 
Then $\sigma\colon  \invisom(\frakA(\calG))\to \F(\calG)$ is a group homomorphism.\end{lma}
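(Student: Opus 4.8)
The plan is to show that $\sigma$ respects multiplication and inversion by reducing everything to the already-established structure theory from \cref{prp:UFpGpd}. By that proposition, every invertible isometry $\mathfrak{a} \in \invisom(\frakA(\calG))$ has a unique factorization $\mathfrak{a} = f \ast \mathbbm{1}_B$ with $f \in C(\calG^{(0)}, \TT)$ and $B = \supp(\mathfrak{a}) = \sigma(\mathfrak{a}) \in \F(\calG)$. So the task amounts to proving that for $\mathfrak{a}, \mathfrak{b} \in \invisom(\frakA(\calG))$ one has $\supp(\mathfrak{a}\ast\mathfrak{b}) = \supp(\mathfrak{a})\,\supp(\mathfrak{b})$, where the right-hand product is the bisection product in $\F(\calG)$.

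First I would reduce to the case of indicator functions. Writing $\mathfrak{a} = f \ast \mathbbm{1}_A$ and $\mathfrak{b} = g \ast \mathbbm{1}_B$, I would observe that multiplication by a $\TT$-valued function $f \in C(\calG^{(0)}, \TT)$ does not change the support of an element (since $f$ is nowhere zero on $\calG^{(0)}$ and convolution by it on the left only rescales values). Concretely, I would compute $\mathfrak{a}\ast\mathfrak{b} = (f \ast \mathbbm{1}_A)\ast(g \ast \mathbbm{1}_B)$ and use that conjugating $g$ past $\mathbbm{1}_A$ produces $(g\circ\rho_A) \ast \mathbbm{1}_A$ — exactly the kind of commutation relation already displayed in the paragraph before \cref{prp:UFpGpd}, namely $\mathbbm{1}_A \ast g = (g \circ \rho_A^{-1}) \ast \mathbbm{1}_A$ for $g \in C(\calG^{(0)},\TT)$. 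After collecting the scalar factors into a single $\TT$-valued function $h \in C(\calG^{(0)},\TT)$, I obtain $\mathfrak{a}\ast\mathfrak{b} = h \ast (\mathbbm{1}_A \ast \mathbbm{1}_B)$, so it suffices to treat the two indicator functions.

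The key computation is then $\mathbbm{1}_A \ast \mathbbm{1}_B = \mathbbm{1}_{AB}$ for $A, B \in \F(\calG)$, from which $\supp(\mathfrak{a}\ast\mathfrak{b}) = AB = \sigma(\mathfrak{a})\sigma(\mathfrak{b})$ follows. I would verify this pointwise on $\calG$: for $x \in \calG$, the convolution is $\sum_{yz=x} \mathbbm{1}_A(y)\mathbbm{1}_B(z)$, which is nonzero precisely when $x$ factors as $x = yz$ with $y \in A$, $z \in B$, and $\sou(y) = \ran(z)$ — that is, exactly when $x \in AB$. Since $A$ is a bisection, such a factorization is unique when it exists, so the sum equals $1$ on $AB$ and $0$ elsewhere, giving $\mathbbm{1}_A \ast \mathbbm{1}_B = \mathbbm{1}_{AB}$. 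Finally, $\sigma$ preserves inversion because $\sigma(\mathfrak{a}^{-1}) = \supp(\mathfrak{a}^{-1}) = B^{-1} = \sigma(\mathfrak{a})^{-1}$, using that $\mathfrak{a}^{-1}$ is supported on $\supp(\mathfrak{a})^{-1}$ as recorded in \cref{prp:UFpGpd}. The main obstacle, such as it is, lies in bookkeeping the $\TT$-valued scalar factors correctly through the commutation relation; once the reduction to indicators is clean, the combinatorics of the bisection product make the rest routine.
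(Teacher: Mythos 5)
Your proposal is correct and follows essentially the same route as the paper: decompose $\mathfrak{a}=f\ast\mathbbm{1}_A$ and $\mathfrak{b}=g\ast\mathbbm{1}_B$ via \cref{prp:UFpGpd}, push the $\TT$-valued function past $\mathbbm{1}_A$ using the commutation relation $\mathbbm{1}_A\ast g=(g\circ\rho_{A^{-1}})\ast\mathbbm{1}_A$, and conclude that the product is a $\TT$-valued function times $\mathbbm{1}_{AB}$, hence has support $AB$. The only cosmetic differences are that you verify $\mathbbm{1}_A\ast\mathbbm{1}_B=\mathbbm{1}_{AB}$ pointwise (which the paper uses without comment) and you check inversion separately, which is redundant once multiplicativity and unitality are established.
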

\begin{proof}
It is clear that $\sigma(1)=\calG^{(0)}$, which is the unit of $\F(\calG)$, so it suffices to check that $\sigma$ is multiplicative.
Let $f,g\in C(\calG^{(0)},\mathbb{T})$ and $A,B\in\F(\calG)$ be given, and set $\mathfrak{a}=f\ast \mathbbm{1}_A$ and $\mathfrak{b}=g\ast \mathbbm{1}_B$.
Then
\begin{align*}
	\mathfrak{a}\ast \mathfrak{b}=(f \ast \mathbbm{1}_A ) \ast (g \ast \mathbbm{1}_B) &= f \ast (\mathbbm{1}_A \ast g \ast \mathbbm{1}_{A^{-1}}) \ast \mathbbm{1}_A \ast \mathbbm{1}_B \\
&= (f \cdot (g \circ \rho_{A^{-1}})) \ast \mathbbm{1}_{AB}.\end{align*}
Since $f \cdot (g \circ \rho_{A^{-1}})$ belongs to $C(\calG^{(0)},\mathbb{T})$, the uniqueness part of \cref{prp:UFpGpd} implies that $\supp(\mathfrak{a}\ast \mathfrak{b})=AB$, as desired, so that $\sigma(\mathfrak{a}\ast \mathfrak{b})=\sigma(a)\sigma(b)$. 
\end{proof}

The following is the first main result in this work.

\begin{thm} 
\label{prp:identifying the quotient of invertible isometries with the topological full group}
Let $\calG$ be a Hausdorff ample groupoid with compact unit space. 
Let~$\frakA(\calG)$ denote either $L^I (\calG)$, $\Fpast(\calG)$, or $\Fp(\calG)$, with $p \in [1, \infty) \setminus \{2\}$. Then the sequence 
\[
\begin{tikzcd}
	1 \arrow{r}{} & \invisom(\frakA(\calG))_0 \arrow{r}{\iota} & \invisom(\frakA(\calG)) \arrow{r}{\sigma} & \F(\calG) \arrow{r}{} & 1 ,
\end{tikzcd}
\]
is split exact.
In particular, there is a canonical identification
\[
\invisom(\frakA(\calG))\cong C(\calG^{(0)},\mathbb{T})\rtimes \F(\calG),
\] 
where the action of $\F(\calG)$ on $C(\calG^{(0)},\mathbb{T})$ is given by $\rho$ (see \cref{dfn: topological full groups}). 
Moreover, there is a canonical group isomorphism
\[
\Psi\colon \F(\calG) 
\to \pi_0\big(\invisom(\frakA(\calG))\big)
= \invisom(\frakA(\calG)) / \invisom_0(\frakA(\calG))
\]
given by $\Psi(B)=[\mathbbm{1}_B]_0$ for all $B\in\F(\calG)$.
\end{thm}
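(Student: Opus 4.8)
The plan is to establish the split exactness of the sequence, since the remaining assertions follow formally. I have already shown in \cref{lma:sigma} that $\sigma$ is a group homomorphism, and \cref{prp:UFpGpd} shows it is surjective (given $B \in \F(\calG)$, the element $\mathbbm{1}_B$ satisfies $\sigma(\mathbbm{1}_B) = B$). The same proposition identifies $\ker(\sigma)$ as exactly those $\mathfrak{a}$ with $\supp(\mathfrak{a}) = \calG^{(0)}$, which is precisely $C(\calG^{(0)},\TT) = \invisom(\frakA(\calG))_0$; here the last equality is the ``ample'' clause of \cref{prp:UFpGpd}. This gives exactness at the three interior positions at once. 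The main obstacle is to produce a \emph{group-theoretic} splitting: I would define $s \colon \F(\calG) \to \invisom(\frakA(\calG))$ by $s(B) = \mathbbm{1}_B$, and I must check that this is a homomorphism, i.e.\ $\mathbbm{1}_A \ast \mathbbm{1}_B = \mathbbm{1}_{AB}$. This is the computation $\mathbbm{1}_A \ast \mathbbm{1}_B = \mathbbm{1}_{AB}$ for $A, B \in \F(\calG)$, which holds because convolution of indicator functions of bisections corresponds to the product bisection when $A, B$ are full; it is essentially contained in the displayed identity in the proof of \cref{lma:sigma} with $f = g = \mathbbm{1}_{\calG^{(0)}}$, where $f \cdot (g \circ \rho_{A^{-1}})$ reduces to $\mathbbm{1}_{\calG^{(0)}}$. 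Since $\sigma \circ s = \id$, the sequence splits.

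Having a group-theoretic splitting, the semidirect product decomposition $\invisom(\frakA(\calG)) \cong C(\calG^{(0)},\TT) \rtimes \F(\calG)$ is automatic from the theory of split extensions, and I only need to identify the conjugation action of the copy $s(\F(\calG)) = \{\mathbbm{1}_B : B \in \F(\calG)\}$ on the kernel $C(\calG^{(0)},\TT)$. Computing $\mathbbm{1}_B \ast f \ast \mathbbm{1}_{B^{-1}}$ for $f \in C(\calG^{(0)},\TT)$ yields $f \circ \rho_{B^{-1}} = f \circ \rho_B^{-1}$, matching the action by $\rho$ asserted in the statement; this is the same kind of bisection-conjugation identity already used in \cref{lma:sigma}.

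For the final isomorphism $\Psi$, I would argue as follows. By the ample clause of \cref{prp:UFpGpd}, the identity component $\invisom(\frakA(\calG))_0$ equals $C(\calG^{(0)},\TT) = \ker(\sigma)$, so $\sigma$ descends to an isomorphism
\[
\invisom(\frakA(\calG))/\invisom(\frakA(\calG))_0 \xrightarrow{\ \cong\ } \F(\calG).
\]
Since $C(\calG^{(0)},\TT)$ is (being a group of invertible isometries homotopic to the identity) exactly the path component of $1$, the quotient on the left is identified with $\pi_0(\invisom(\frakA(\calG)))$, and the inverse of $\sigma$ is the map sending $B$ to the class of any lift, in particular $[\mathbbm{1}_B]_0$. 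Thus $\Psi(B) = [\mathbbm{1}_B]_0$ is a well-defined group isomorphism, as its composition with the descended $\sigma$ is the identity on $\F(\calG)$. The only point requiring care is confirming that $\invisom(\frakA(\calG))_0$ coincides with $\ker(\sigma)$ rather than merely being contained in it; this is precisely what the ample hypothesis buys us via \cref{prp:UFpGpd}, since any $f \ast \mathbbm{1}_B$ with $B \neq \calG^{(0)}$ cannot be homotopic to $1$ as $\sigma$ is continuous onto the discrete group $\F(\calG)$.
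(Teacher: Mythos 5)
Your proposal is correct and follows essentially the same route as the paper: exactness is read off from \cref{prp:UFpGpd}, the splitting is $B\mapsto\mathbbm{1}_B$, and the semidirect-product action is identified through the convolution identity from \cref{lma:sigma}. The only divergence is that the paper re-establishes the identification $\invisom(\frakA(\calG))_0=\ker(\sigma)$ via a separate claim comparing homotopies in $\PIMP(\frakA(\calG))$ with homotopies in $\invisom(\frakA(\calG))$, whereas you import it directly from the last clause of \cref{prp:UFpGpd}, which is a legitimate citation of an already proved statement.
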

\begin{proof}
We begin by establishing the following claim ($\simeq$ is the equivalence relation of homotopy). 

\textbf{Claim:} \emph{Let $\mathfrak{a}_0 , \mathfrak{a}_1 \in \invisom(\frakA(\calG))$ be given. Then $\mathfrak{a}_0 \simeq \mathfrak{a}_1$ in $\PIMP(\frakA(\calG))$ if and only if $\mathfrak{a}_0 \simeq \mathfrak{a}_1$ in $\invisom(\frakA(\calG))$.} The ``if" implication is obvious. For the converse, let $\mathfrak{a}_t$ implement a homotopy in $\PIMP(\frakA(\calG))$ between $\mathfrak{a}_0$ and $\mathfrak{a}_1$. By compactness, there exist $0 = t_0 < t_1 < \ldots < t_{n-1} < t_n = 1$ such that $\|\mathfrak{a}_{t} - \mathfrak{a}_{t^{\prime}}\| < 1$ whenever $t,t^\prime \in [t_i  , t_{i+1}]$, for $i = 0,1, \ldots n-1$. In particular, $\|\mathfrak{a}_0 - \mathfrak{a}_{t}\| < 1$ for all $t \in [0,t_1]$, which implies that $\mathfrak{a}_{t}$ is invertible. Since $\mathfrak{a}_{t} = \mathfrak{a}_{t} \mathfrak{a}_{t}^{\dag} \mathfrak{a}_{t}$, we deduce that $\mathfrak{a}_{t}^{-1} = \mathfrak{a}_{t}^{\dag}$, so that $\mathfrak{a}_{t} \in \invisom(\frakA(\calG))$ for all $t \in [0,t_1]$. By induction, we get that $\mathfrak{a}_t \in \invisom(\frakA(\calG))$ for all $t \in [0,1]$, and the claim follows.

We turn to showing that the sequence in the statement is split exact.
We will identify $\invisom(\frakA(\calG))_0$ canonically with $C(\calG^{(0)},\mathbb{T})$ via \cref{prp:UFpGpd}. 
It is immediate that $\sigma\circ\iota=\calG^{(0)}$, since the support of any function
in $C(\calG^{(0)},\mathbb{T})$ is $\calG^{(0)}$. Conversely, it follows from the description of $\invisom(\frakA(\calG))$ given in \cref{prp:UFpGpd} together with the claim just proved and \cite[Lemma 3.12]{GarPalThi25arX:RigidityAmpleGpds}, that 
$\sigma(\mathfrak{a})=\calG^{(0)}$ if and only if $\mathfrak{a}\in\invisom(\frakA(\calG))_0$, showing that the sequence is exact.
Finally, a section for $\sigma$ is given by $B\mapsto \mathbbm{1}_B$, for $B\in \F(\calG)$.
\end{proof}
	
\begin{rmk}
The split exactness of the above sequence has been shown independently in \cite{GarGun24pre:EmbLpOpAlg}, with different methods, for étale groupoids that are effective.
\end{rmk}
	
We now use the above to deduce some interesting consequences. We always consider
topological full groups as discrete groups.

\begin{cor}
\label{cor:Amenability}
Let $\calG$ be a Hausdorff ample groupoid with compact unit space.
Then the following are equivalent:
\begin{enumerate} 
\item 
$\invisom(\frakA(\calG))$ is amenable in the discrete topology.
\item 
$\invisom(\frakA(\calG))$ is amenable in the norm topology.
\item 
$\F(\calG)$ is amenable. 
\end{enumerate}
\end{cor}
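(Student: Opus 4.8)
The plan is to establish the cycle of implications $(1)\Rightarrow(2)\Rightarrow(3)\Rightarrow(1)$, using throughout the identification
\[
\invisom(\frakA(\calG))\;\cong\; C(\calG^{(0)},\TT)\rtimes\F(\calG)
\]
furnished by \cref{prp:identifying the quotient of invertible isometries with the topological full group}, together with three standard permanence properties of amenability: every abelian group is amenable; amenability passes to quotients by closed normal subgroups; and an extension of an amenable group by an amenable group is again amenable. The first and third hold verbatim for discrete groups, while the quotient property holds for arbitrary topological groups (the normal subgroup being closed), which is all I shall need. The implication $(1)\Rightarrow(2)$ is then the soft direction: a group that is amenable as a discrete group is amenable as a topological group, since an invariant mean on $\ell^\infty(\invisom(\frakA(\calG)))$ restricts to an invariant mean on the subspace of norm-continuous bounded functions.

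For $(3)\Rightarrow(1)$ I would view $\invisom(\frakA(\calG))$ as a discrete group. The support homomorphism $\sigma$ of \cref{lma:sigma} yields a short exact sequence of discrete groups
\[
1\longrightarrow C(\calG^{(0)},\TT)\longrightarrow \invisom(\frakA(\calG))\stackrel{\sigma}{\longrightarrow}\F(\calG)\longrightarrow 1 ,
\]
whose kernel $C(\calG^{(0)},\TT)$ is abelian, hence amenable, and whose quotient $\F(\calG)$ is amenable by hypothesis. Since amenability is closed under extensions, $\invisom(\frakA(\calG))$ is amenable in the discrete topology, giving $(1)$.

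The remaining implication $(2)\Rightarrow(3)$ is the heart of the matter. By \cref{prp:UFpGpd} the connected component $\invisom(\frakA(\calG))_0$ equals $C(\calG^{(0)},\TT)$, a \emph{closed} normal subgroup; moreover it is \emph{open}, because invertible isometries with distinct supports are bounded apart in norm. Concretely, if $\supp(\mathfrak a)\neq\calG^{(0)}$ then, evaluating $\mathfrak a-1$ on a non-unit element of $\supp(\mathfrak a)$ together with the corresponding unit, one gets $\|\mathfrak a-1\|\geq 1$; hence $\sigma$ is locally constant and $\ker\sigma=\invisom(\frakA(\calG))_0$ is open. Consequently the quotient $\invisom(\frakA(\calG))/\invisom(\frakA(\calG))_0\cong\F(\calG)$ carries the \emph{discrete} topology. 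As amenability of a topological group descends to quotients by closed normal subgroups, $\F(\calG)$ is amenable as a topological group; being discrete, it is amenable as a discrete group, which is exactly condition $(3)$.

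I expect the only real subtlety to reside in $(2)\Rightarrow(3)$. In general a topological group that is amenable need not be amenable as a discrete group (compact groups containing free subgroups are the classical obstruction), so it is essential that the relevant quotient $\F(\calG)$ be discrete. The openness of $\invisom(\frakA(\calG))_0$ — equivalently, the norm-separation of invertible isometries with different supports — is precisely what secures this, and is the one point where the specific geometry of $\frakA(\calG)$, rather than soft permanence properties of amenability, enters the argument.
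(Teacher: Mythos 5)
Your proposal is correct and follows essentially the same route as the paper: the cycle $(1)\Rightarrow(2)\Rightarrow(3)\Rightarrow(1)$, with $(1)\Rightarrow(2)$ by restricting an invariant mean on $\ell^\infty$, $(2)\Rightarrow(3)$ by passing to the quotient by the open closed normal subgroup $\invisom(\frakA(\calG))_0=C(\calG^{(0)},\TT)$ (whose openness, which the paper asserts without proof, you justify correctly via the norm separation $\|\mathfrak a-1\|\geq 1$ when $\supp(\mathfrak a)\neq\calG^{(0)}$), and $(3)\Rightarrow(1)$ by closure of amenability under extensions applied to the split exact sequence of \cref{prp:identifying the quotient of invertible isometries with the topological full group}. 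The only cosmetic difference is that the paper phrases $(1)\Rightarrow(2)$ in terms of right-uniformly-continuous bounded functions rather than norm-continuous ones, which does not affect the argument.
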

\begin{proof}
Recall that a topological group~$G$ is said to be amenable if there exists a left-invariant mean on the space $\mathrm{RUCB}(G)$ of right-uniformly-continuous, bounded functions on~$G$.
If $G$ is amenable in the discrete topology, then there exists a left-invariant mean on $\ell^\infty(G)$;
restricting it to $\mathrm{RUCB}(G) \subseteq \ell^\infty(G)$ we deduce amenability of $G$.
This proves that~(1) implies~(2), in fact for any topological group.

To show that~(2) implies~(3), assume that $\invisom(\frakA(\calG))$ is amenable in the norm topology.
Since $\invisom(\frakA(\calG))_0$ is an open, normal subgroup of $\invisom(\frakA(\calG))$, the quotient topology on $\invisom(\frakA(\calG)) / \invisom(\frakA(\calG))_0$ induced by the norm topology on $\invisom(\frakA(\calG))$ is discrete.
By \cite[Theorem~4.6]{Ric67AmenGpsFixedPtPty}, amenability passes to quotients by closed, normal subgroups.
Hence, the quotient $\invisom(\frakA(\calG)) / \invisom(\frakA(\calG))_0$  is amenable (as a discrete group), and therefore $\F(\calG)$ is amenable via the isomorphism from 
\cref{prp:identifying the quotient of invertible isometries with the topological full group}.

To show that~(3) implies~(1), assume that $\F(\calG)$ is amenable. 
By \cref{prp:UFpGpd}, the group $\invisom(\frakA(\calG))_0$ is abelian and thus amenable in the discrete topology. 
Further, the quotient $\invisom(\frakA(\calG)) / \invisom_0(\frakA(\calG))$ is isomorphic to $\F(\calG)$, and therefore amenable by assumption.
Using that a discrete group $G$ is amenable whenever it contains a normal subgroup $N$ such that both~$N$ and $G/N$ are amenable, it follows that $\invisom(\frakA(\calG))$ is amenable in the discrete topology.
\end{proof}

Little appears to be known in general about the group of invertible isometries in Banach algebras, while there is a growing amount of knowledge about topological full groups. 
\cref{cor:Amenability} allows us to establish some properties of the group of invertible isometries. Specifically for $L^p$-Cuntz algebras, we can 
deduce nonamenability of their invertible isometry groups.

\begin{exa}
For $n\geq 2$, let $\calG_n$ denote the Cuntz groupoid, which is the groupoid associated with the graph with one vertex and $n$ loops. Then $F^p(\calG_n)$ is the $L^p$-Cuntz algebra $\mathcal{O}_{n}^{p}$ introduced in \cite{Phi12arX:LpAnalogsCtz}; see \cite{GarLup17ReprGrpdLp}. Moreover, the topological full group $\F(\calG_n)$ of $\calG_n$ is well-known to be Thompson's group $V_n$; see \cite{Nek04CuntzPimsnerAlgsGpActions}. 
For $p\neq 2$, nonamenability of $V_n$ together with \cref{cor:Amenability} implies that $\invisom(\mathcal{O}_{n}^{p})$ is non-amenable. The same result holds for $p=2$ (although the proof is very different), which is a consequence of \cite[Theorem~3.7]{Ng_amenability_2006}, since $\mathcal{O}_2$ is well-known not to be stably finite. 
\end{exa}
	
In the next corollary, we are able to recover the rigidity obtained in \cite{GarPalThi25arX:RigidityAmpleGpds} for minimal, effective groupoids.
	
\begin{cor} 
\label{rmk: remark on how the rigidity result already follows from the identification of the topological topological full group}
Let $\calG$ and $\calH$ be Hausdorff, ample, minimal, effective groupoids whose unit spaces are Cantor sets. Then there is an isometric isomorphism $\frakA (\calG) \cong \frakA(\calH)$ if and only if $\calG\cong \calH$.
\end{cor}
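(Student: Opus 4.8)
**Plan for proving Corollary \ref{rmk: remark on how the rigidity result already follows from the identification of the topological topological full group}.**

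The plan is to exploit the functoriality of the construction $\calG \mapsto \invisom(\frakA(\calG))$ together with the identification of the topological full group as an intrinsic invariant of $\frakA(\calG)$. The ``if'' direction is the easy one: an isomorphism $\calG \cong \calH$ of ample groupoids induces an isometric isomorphism $\frakA(\calG) \cong \frakA(\calH)$, because all three algebras $L^I(\calG)$, $\Fpast(\calG)$, and $\Fp(\calG)$ are defined functorially from the groupoid data; this is already recorded in \cite{GarPalThi25arX:RigidityAmpleGpds}. So the content is the ``only if'' direction, and the strategy is to recover $\calG$ from $\frakA(\calG)$ by first recovering $\F(\calG)$ and its action, and then invoking a reconstruction theorem for minimal effective groupoids.

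First I would observe that an isometric algebra isomorphism $\Phi\colon \frakA(\calG) \to \frakA(\calH)$ restricts to a group isomorphism $\invisom(\frakA(\calG)) \to \invisom(\frakA(\calH))$, since isometric isomorphisms preserve both invertibility and the norm. By \cref{prp:identifying the quotient of invertible isometries with the topological full group}, passing to the quotient by the connected component of the identity yields a group isomorphism
\[
\F(\calG) \cong \pi_0\big(\invisom(\frakA(\calG))\big) \cong \pi_0\big(\invisom(\frakA(\calH))\big) \cong \F(\calH).
\]
This already produces an abstract isomorphism of the topological full groups. The key point is that $\Phi$ also carries $\invisom(\frakA(\calG))_0 = C(\calG^{(0)}, \TT)$ onto $\invisom(\frakA(\calH))_0 = C(\calH^{(0)}, \TT)$, and hence induces an isometric isomorphism $C(\calG^{(0)}) \cong C(\calH^{(0)})$; by Gelfand duality this gives a homeomorphism $\calG^{(0)} \cong \calH^{(0)}$ of the unit spaces, which are Cantor sets by hypothesis.

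Next I would promote this to an isomorphism of the dynamical data. The isomorphism $\Phi$ is compatible with the split extensions from \cref{prp:identifying the quotient of invertible isometries with the topological full group}, so it intertwines the action of $\F(\calG)$ on $C(\calG^{(0)}, \TT)$ with that of $\F(\calH)$ on $C(\calH^{(0)}, \TT)$; concretely, the semidirect-product decomposition $\invisom(\frakA(\calG)) \cong C(\calG^{(0)}, \TT) \rtimes_{\rho} \F(\calG)$ is preserved. Transporting through the Gelfand homeomorphism $\calG^{(0)} \cong \calH^{(0)}$, this means the isomorphism $\F(\calG) \cong \F(\calH)$ is equivariant for the homeomorphism actions $\rho$ on the unit spaces. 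Since $\calG$ and $\calH$ are effective, the map $\rho$ is injective (\cref{dfn: topological full groups}), so $\F(\calG)$ may be viewed as a concrete group of homeomorphisms of $\calG^{(0)}$, and the data we have recovered is precisely the pair $(\calG^{(0)}, \F(\calG) \curvearrowright \calG^{(0)})$ up to conjugacy.

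The main obstacle, and the final step, is to pass from this reconstructed dynamical pair back to the groupoid itself. For a minimal effective ample groupoid with Cantor unit space, the groupoid is reconstructed from the action of its topological full group on the unit space: this is the reconstruction theorem of Matui and of Nyland–Ortega \cite[Theorem~3.10]{Mat15TopFullGpOneSidedShift}, \cite[Theorem~7.2]{NylOrt19TopFullGpsAmpleGpds} cited in the introduction, which asserts that for such groupoids $\F(\cdot)$ is a complete invariant. Invoking that result with the equivariant isomorphism $\F(\calG) \cong \F(\calH)$ constructed above yields $\calG \cong \calH$, completing the proof. The delicate point to check carefully is that the isomorphism of topological full groups we extract is genuinely spatially implemented by the homeomorphism of unit spaces coming from Gelfand duality—i.e. that all the identifications are compatible—so that the hypotheses of the reconstruction theorem are actually met rather than merely an abstract group isomorphism being produced.
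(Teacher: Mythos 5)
Your proposal is correct and follows essentially the same route as the paper: extract the abstract group isomorphism $\F(\calG)\cong\pi_0\big(\invisom(\frakA(\calG))\big)\cong\pi_0\big(\invisom(\frakA(\calH))\big)\cong\F(\calH)$ from \cref{prp:identifying the quotient of invertible isometries with the topological full group} and then conclude via \cite[Theorem~3.10]{Mat15TopFullGpOneSidedShift}. The only divergence is your additional verification that this isomorphism is spatially implemented by the Gelfand homeomorphism of the unit spaces; this extra step is sound but unnecessary, since Matui's theorem is a Rubin-type result that already upgrades a purely abstract isomorphism of topological full groups to an isomorphism of the underlying minimal effective groupoids, which is exactly how the paper applies it.
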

\begin{proof}
Since the ``if" implication is obvious, we prove the converse. Let $\varphi\colon \frakA (\calG) \to\frakA(\calH)$ be an isometric isomorphism. It is straightforward to see that $\varphi(\invisom(\frakA(\calG)))= \invisom(\frakA(\calH))$, and that the same is true for the connected components of the identity. 
Moreover, $\varphi$ induces an isomorphism 
\[
\invisom(\frakA(\calG))/\invisom(\frakA(\calG))_0 \cong \invisom(\frakA(\calH))/\invisom(\frakA(\calH))_0,
\] 
and thus it follows from \cref{prp:identifying the quotient of invertible isometries with the topological full group} that
\[
\F(\calG) \cong \invisom(\frakA(\calG))/\invisom(\frakA(\calG))_0 \cong \invisom(\frakA(\calH))/\invisom(\frakA(\calH))_0 \cong \F(\calH).
\]
An application of \cite[Theorem 3.10]{Mat15TopFullGpOneSidedShift} then gives that $\calG \cong \calH$, as desired. 
\end{proof}

\section{Automorphism groups}

In view of the rigidity results from \cite{GarPalThi25arX:RigidityAmpleGpds}, it is natural to expect a close relationship between the automorphism groups of $\calG$ and of $\frakA(\calG)$, at least when $\calG$ is a Hausdorff, ample groupoid. 
The goal of this section is to make this relationship concrete and explicit. 
For this, we need to establish some notation and definitions first.

\begin{ntn}
For a Banach algebra $\frakA$, we will denote by
$\aut(\frakA)$ its automorphism group. When $\frakA$ is unital, we denote by $\inn(\frakA)$ the (normal) subgroup of $\aut(\frakA)$ consisting of those 
automorphisms of the form $\mathrm{Ad}(\mathfrak{u})$, for $\mathfrak{u} \in \invisom(\frakA)$; we call these the \emph{inner} automorphisms of $\frakA$. In this case, we write $\out(\frakA)$ for the outer automorphism group of $\frakA$, namely the quotient $\out(\frakA)= \aut(\frakA) / \inn (\frakA)$. 

If $\calG$ is a groupoid, we will denote by $\aut(\calG)$ its group of automorphisms.
\end{ntn}

Next, we introduce cocycles on ample groupoids in a way that is convenient to us; we refer the reader to \cite[Definition~I.1.12]{Ren80GrpdApproach} for the general definition.
Given $f\in C(\calG^{(0)} , \TT)$, we let $\widehat{f}\colon \calG\to \TT$ be given by $\widehat{f}(x)=\overline{f(\mathsf{r}(x))} f(\mathsf{s}(x))$ for all $x \in \calG$.

\begin{dfn}
\label{df:cocycles}
A \emph{1-cocycle} on $\calG$ is a continuous groupoid homomorphism $\calG\to \TT$, 
and we denote by $\Zycles (\calG, \TT)$ the group of all 1-cocycles endowed with the pointwise product. 
A 1-cocycle $\xi\colon \calG\to\TT$ is said to be a \emph{1-coboundary} if there exists $f \in C(\calG^{(0)} , \TT)$ such that $\xi=\widehat{f}$. 
We write $\Bound (\calG, \TT)$ for the (normal) subgroup of $\Zycles(\calG,\TT)$ consisting
of 1-coboundaries. 
Finally, the \emph{1-cohomology} group of $\calG$ is 
\[
\Hquot (\calG, \TT) = \Zycles (\calG, \TT) / \Bound (\calG, \TT).
\] 
\end{dfn}

We now proceed to define homomorphisms between the groups introduced above. 

\begin{dfn}
\label{dfn:Gamma}
Let $\calG$ be a Hausdorff ample groupoid. 
Given $\xi \in \Zycles (\calG, \TT)$, we define 
a map $\Gamma(\xi)\colon C_c(\calG)\to C_c(\calG)$ by setting
\[\Gamma (\xi) (f) (x) = \xi (x) f(x)\]
for all $f \in C_c (\calG)$ and $x \in \calG$. It follows easily from the fact that $\xi$ is a continuous groupoid homomorphism that $\Gamma(\xi)$ extends to an automorphism of $\frakA(\calG)$, which we continue to denote by $\Gamma(\xi)$. 
\end{dfn}

The assignment described in the above definition determines a map
\[
\Gamma\colon \Zycles(\calG,\TT)\to \aut(\frakA(\calG)),
\] 
which is readily seen to be a group homomorphism.
The construction of the homomorphism $\Omega\colon \aut(\frakA(\calG))\to \aut(\calG)$ takes a bit more work. 

\begin{ntn}
\label{dfn:Phi}
Let $\calG$ be a Hausdorff ample groupoid and let $\mathcal{B}^{\mathrm{o}}_\mathrm{c}(\calG)$ denote the inverse semigroup of its compact, open bisections. Recall that $\mathbbm{1}_B$ belongs to $\PIMP(\frakA(\calG))$ for all $B\in\mathcal{B}^{\mathrm{o}}_\mathrm{c}(\calG)$. Let $\sigma\colon  \PIMP(\frakA(\calG))\to \mathcal{B}^{\mathrm{o}}_\mathrm{c}(\calG)$ denote the inverse-semigroup homomorphism given by $\sigma(\mathfrak{a})=\mathrm{supp}(\mathfrak{a})$ for all $\mathfrak{a}\in\PIMP(\frakA(\calG))$. We define a group homomorphism
\[\Phi\colon \aut(\frakA(\calG))\to \aut(\mathcal{B}^{\mathrm{o}}_\mathrm{c}(\calG))\] by
setting
\[\Phi_\alpha(B)=\sigma(\alpha(\mathbbm{1}_B))\] for all $\alpha\in\aut(\frakA(\calG))$ and all $B\in \mathcal{B}^{\mathrm{o}}_\mathrm{c}(\calG)$. 

Recall that $C_0(\calG^{(0)})$ is the C*-core 
of $\frakA(\calG)$ (see either \cite{ChoGarThi24LpRigidity} or \cite{GarPalThi25arX:RigidityAmpleGpds}), and is therefore intrinsically encoded in $\frakA(\calG)$. Thus, any automorphism of $\frakA(\calG)$ restricts to an automorphism of $C_0(\calG^{(0)})$, which corresponds via Gelfand duality with a homeomorphism of $\calG^{(0)}$. In other words, there is a canonical group homomorphism
\[\Upsilon\colon  \aut(\frakA(\calG))\to \mathrm{Homeo}(\calG^{(0)})\]  
satisfying $f(\Upsilon_\alpha^{-1}(u))=\alpha(f)(u)$ for all $f\in C_0(\calG^{(0)})$, for all $\alpha\in\aut(\frakA(\calG))$ and all $u\in \calG^{(0)}$.
\end{ntn}

The homomorphisms $\Phi$ and $\Upsilon$ are compatible in the following sense.

\begin{lma} \label{lma: compatibility of the induced homomorphisms from an automorphism}
Let $\calG$ be a Hausdorff ample groupoid, let $\alpha\in\aut(\frakA(\calG))$ and let $B \in \Bco(\calG)$. 
Then 
\begin{equation} 
\label{eq: compatibility of homeomorpism and automomorphism of inverse semigroup}
	\Upsilon_\alpha(\sou(B)) = \sou(\Phi_\alpha(B))  \ \ \mbox{ and } \ \ \Upsilon_\alpha(\ran(B)) = \ran(\Phi_\alpha(B)).
\end{equation}
\end{lma}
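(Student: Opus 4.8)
The plan is to reduce both identities to the single statement that $\Phi_\alpha$ and $\Upsilon_\alpha$ agree on the compact, open subsets of the unit space. The starting observation is that the idempotents of the inverse semigroup $\Bco(\calG)$ are precisely the compact, open subsets of $\calG^{(0)}$, and that source and range are encoded by the canonical idempotents: for any $B\in\Bco(\calG)$ one has $B^{-1}B=\sou(B)$ and $BB^{-1}=\ran(B)$, where $\sou(B),\ran(B)\subseteq\calG^{(0)}$ are regarded as idempotent bisections. Since $\Phi_\alpha$ is an automorphism of $\Bco(\calG)$ and thus preserves products and inverses, I would first record
\[
\sou(\Phi_\alpha(B))=\Phi_\alpha(B)^{-1}\Phi_\alpha(B)=\Phi_\alpha(B^{-1}B)=\Phi_\alpha(\sou(B)),
\]
and symmetrically $\ran(\Phi_\alpha(B))=\Phi_\alpha(\ran(B))$. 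Hence it suffices to prove that $\Phi_\alpha(U)=\Upsilon_\alpha(U)$ for every compact, open $U\subseteq\calG^{(0)}$.

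To establish this key identity I would fix such a $U$ and use that $\mathbbm{1}_U$ is a projection in the $C^{*}$-core $C_0(\calG^{(0)})\subseteq\frakA(\calG)$ with $\supp(\mathbbm{1}_U)=U$. Because $\alpha$ restricts to an automorphism of $C_0(\calG^{(0)})$ that is Gelfand-dual to $\Upsilon_\alpha$, the defining relation $\alpha(f)(u)=f(\Upsilon_\alpha^{-1}(u))$ applied to $f=\mathbbm{1}_U$ gives $\alpha(\mathbbm{1}_U)(u)=\mathbbm{1}_U(\Upsilon_\alpha^{-1}(u))=\mathbbm{1}_{\Upsilon_\alpha(U)}(u)$, so that $\alpha(\mathbbm{1}_U)=\mathbbm{1}_{\Upsilon_\alpha(U)}$. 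Taking supports then yields
\[
\Phi_\alpha(U)=\sigma(\alpha(\mathbbm{1}_U))=\supp\big(\mathbbm{1}_{\Upsilon_\alpha(U)}\big)=\Upsilon_\alpha(U).
\]

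Combining the two computations finishes the source identity, via $\sou(\Phi_\alpha(B))=\Phi_\alpha(\sou(B))=\Upsilon_\alpha(\sou(B))$, and the range identity follows verbatim with $BB^{-1}$ in place of $B^{-1}B$.

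I do not expect a genuine obstacle here: once one recognizes that source and range in $\Bco(\calG)$ are carried by the idempotents $B^{-1}B$ and $BB^{-1}$, the argument is bookkeeping. The only real content is the identification of $\Phi_\alpha$ with $\Upsilon_\alpha$ on idempotents, which rests on $C_0(\calG^{(0)})$ being intrinsically the $C^{*}$-core together with $\sigma(\mathbbm{1}_U)=U$; both are supplied by the preceding results. The one point demanding care is matching conventions for how $\Upsilon_\alpha$ acts — in particular the inverse appearing in the Gelfand-duality relation $\alpha(f)(u)=f(\Upsilon_\alpha^{-1}(u))$ — to make sure the final set equalities come out in the stated direction.
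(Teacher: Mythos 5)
Your proposal is correct and rests on exactly the same ingredients as the paper's proof: the Gelfand-duality relation $\alpha(\mathbbm{1}_U)=\mathbbm{1}_{\Upsilon_\alpha(U)}$ for compact open $U\subseteq\calG^{(0)}$, the identity $\sou(B)=B^{-1}B$ (resp.\ $\ran(B)=BB^{-1}$), and multiplicativity of $\alpha$ and $\sigma$, which the paper packages as a single chain of equalities of indicator functions while you factor it through the fact that $\Phi_\alpha$ is an inverse-semigroup automorphism. The reorganization is harmless; the argument is the same.
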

\begin{proof}
For the first identity, note that
$$ \mathbbm{1}_{\Upsilon_\alpha(\sou(B))} = \mathbbm{1}_{\sou(B)} \circ \Upsilon_\alpha^{-1} = \alpha(\mathbbm{1}_{\sou(B)}) = \alpha(\mathbbm{1}_{B^{-1}} \mathbbm{1}_{B}) = \alpha(\mathbbm{1}_{B})^{-1} \alpha(\mathbbm{1}_{B}) = \mathbbm{1}_{\sou(\Phi_\alpha(B))}.$$
The second identity follows similarly. 
\end{proof}

\begin{prp}
\label{prop:Omega}
Let $\calG$ be an ample groupoid and let $\alpha\in\aut(\frakA(\calG))$. Given $x\in\calG$ and a clopen bisection $B\in \Bco(\calG)$ containing $x$, the expression 
\[\Omega_\alpha(x)= \Upsilon_\alpha(\mathsf{r}(x)) \Phi_\alpha(B) = \Phi_\alpha(B)\Upsilon_\alpha(\mathsf{s}(x))\in \calG,\]
is independent of $B$. Moreover, $\Omega_\alpha$ is an automorphism of $\calG$, and the resulting assignment
$\Omega\colon \aut(\frakA(\calG))\to\aut(\calG)$ is a group homomorphism.
\end{prp}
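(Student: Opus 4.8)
The plan is to establish \cref{prop:Omega} in three stages: first that the displayed formula defines an element of $\calG$ independent of the chosen bisection $B$, then that $\Omega_\alpha$ is a groupoid automorphism, and finally that $\Omega$ is a group homomorphism. I would begin with well-definedness. Fix $x\in\calG$ and a clopen bisection $B\ni x$. The two expressions $\Upsilon_\alpha(\ran(x))\Phi_\alpha(B)$ and $\Phi_\alpha(B)\Upsilon_\alpha(\sou(x))$ are products in the inverse semigroup $\Bco(\calG)$, where I regard a point $u\in\calG^{(0)}$ as the singleton bisection $\{u\}$. Using \cref{lma: compatibility of the induced homomorphisms from an automorphism}, I would check that $\Upsilon_\alpha(\ran(x))$ is exactly the range unit of $\Phi_\alpha(B)$ and $\Upsilon_\alpha(\sou(x))$ its source unit, so that both products are nonempty singletons and in fact equal; this gives a single point of $\calG$. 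For independence of $B$: if $B'$ is another clopen bisection through $x$, then $B\cap B'$ is a clopen bisection containing $x$, and since $\Phi_\alpha$ is an inverse-semigroup homomorphism it respects the meet operation (intersection), so $\Phi_\alpha(B)$ and $\Phi_\alpha(B')$ agree after restricting to the common source unit $\Upsilon_\alpha(\sou(x))$. Hence the singleton $\Phi_\alpha(B)\Upsilon_\alpha(\sou(x))$ does not depend on $B$.

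Next I would verify that $\Omega_\alpha\colon\calG\to\calG$ is an automorphism of topological groupoids. By construction $\ran(\Omega_\alpha(x))=\Upsilon_\alpha(\ran(x))$ and $\sou(\Omega_\alpha(x))=\Upsilon_\alpha(\sou(x))$, so $\Omega_\alpha$ covers the homeomorphism $\Upsilon_\alpha$ on the unit space and in particular restricts to $\Upsilon_\alpha$ on $\calG^{(0)}$. For multiplicativity, given composable $x,y$ with $\sou(x)=\ran(y)$, I would pick clopen bisections $A\ni x$ and $B\ni y$; then $AB$ is a clopen bisection containing $xy$, and using $\Phi_\alpha(AB)=\Phi_\alpha(A)\Phi_\alpha(B)$ together with the fact that $\Upsilon_\alpha(\sou(x))=\Upsilon_\alpha(\ran(y))$ matches the units, one computes $\Omega_\alpha(xy)=\Omega_\alpha(x)\Omega_\alpha(y)$ directly from the two equivalent expressions in the displayed formula. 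Continuity follows because on each clopen bisection $B$ the map $\Omega_\alpha$ agrees with $x\mapsto\Phi_\alpha(B)\Upsilon_\alpha(\sou(x))$, which is continuous as $\Upsilon_\alpha$ is a homeomorphism and the $\Phi_\alpha(B)$-translation is a homeomorphism between clopen bisections. To see $\Omega_\alpha$ is bijective with continuous inverse, I would observe that $\Omega_{\alpha^{-1}}$ provides a two-sided inverse; this uses the functoriality statements $\Phi_{\alpha^{-1}}=\Phi_\alpha^{-1}$ and $\Upsilon_{\alpha^{-1}}=\Upsilon_\alpha^{-1}$, which follow from $\Phi$ and $\Upsilon$ being group homomorphisms into $\aut(\Bco(\calG))$ and $\mathrm{Homeo}(\calG^{(0)})$ respectively.

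Finally, for the homomorphism property of $\Omega$, I would fix $\alpha,\beta\in\aut(\frakA(\calG))$ and $x\in\calG$ with clopen bisection $B\ni x$. Since $\Phi$ and $\Upsilon$ are already known to be group homomorphisms, we have $\Phi_{\alpha\beta}=\Phi_\alpha\Phi_\beta$ and $\Upsilon_{\alpha\beta}=\Upsilon_\alpha\Upsilon_\beta$. The subtlety is that to evaluate $\Omega_\alpha$ at the point $\Omega_\beta(x)$ I must choose a clopen bisection through $\Omega_\beta(x)$, and the natural choice is $\Phi_\beta(B)$, which indeed contains $\Omega_\beta(x)$ by the first part. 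Then $\Omega_\alpha(\Omega_\beta(x))=\Upsilon_\alpha(\ran(\Omega_\beta(x)))\Phi_\alpha(\Phi_\beta(B))=\Upsilon_\alpha(\Upsilon_\beta(\ran(x)))\Phi_{\alpha\beta}(B)=\Omega_{\alpha\beta}(x)$, using $\ran(\Omega_\beta(x))=\Upsilon_\beta(\ran(x))$ from the second stage. I expect the main obstacle to be the bookkeeping in the well-definedness step — namely confirming that the inverse-semigroup products $\Upsilon_\alpha(\ran(x))\Phi_\alpha(B)$ and $\Phi_\alpha(B)\Upsilon_\alpha(\sou(x))$ genuinely coincide and land on a single point rather than on a larger bisection or the empty set. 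This hinges precisely on \cref{lma: compatibility of the induced homomorphisms from an automorphism}, which guarantees that the restricting units $\Upsilon_\alpha(\ran(x))$ and $\Upsilon_\alpha(\sou(x))$ are compatible with the range and source of $\Phi_\alpha(B)$, so the whole argument reduces to careful use of that lemma together with the inverse-semigroup structure of $\Bco(\calG)$.
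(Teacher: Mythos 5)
Your overall route coincides with the paper's: use \cref{lma: compatibility of the induced homomorphisms from an automorphism} together with the inverse-semigroup structure of $\Bco(\calG)$ to pin down $\Omega_\alpha(x)$, then multiplicativity of $\Phi_\alpha$ for the homomorphism property of $\Omega_\alpha$, local constancy of the chosen bisection for continuity, $\Omega_{\alpha^{-1}}$ as the inverse, and the homomorphism properties of $\Phi$ and $\Upsilon$ for Claim 3; those later stages of your argument are sound and match the paper's. The problem is in your very first step. For a single fixed $B\ni x$, the compatibility lemma gives $\Upsilon_\alpha(\ran(x))\in\ran(\Phi_\alpha(B))$ and $\Upsilon_\alpha(\sou(x))\in\sou(\Phi_\alpha(B))$, so each of $\Upsilon_\alpha(\ran(x))\Phi_\alpha(B)$ and $\Phi_\alpha(B)\Upsilon_\alpha(\sou(x))$ is indeed a single arrow. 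But the lemma is a statement about the source and range \emph{sets} of $\Phi_\alpha(B)$; it says nothing about which point of $\sou(\Phi_\alpha(B))$ is paired with which point of $\ran(\Phi_\alpha(B))$ by the bisection $\Phi_\alpha(B)$. So your ``and in fact equal'' does not follow: a priori the arrow of $\Phi_\alpha(B)$ with range $\Upsilon_\alpha(\ran(x))$ could differ from the arrow with source $\Upsilon_\alpha(\sou(x))$. Since this equality is precisely part of what \cref{prop:Omega} asserts, it is the real content of the well-definedness claim and cannot be waved through.

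The repair is the intersection argument you already half-use for independence of $B$, pushed one step further. The paper sets $\mathcal{B}_x=\{B\in\Bco(\calG)\colon x\in B\}$, observes that $\Phi_\alpha$ preserves inclusions (via $A\subseteq B$ iff $A=BA^{-1}A$; your meet-preservation observation is an equivalent substitute), so that $\{\Phi_\alpha(B)\}_{B\in\mathcal{B}_x}$ is a downward-directed family of nonempty compact sets, and \emph{defines} $\Omega_\alpha(x)$ as the unique point of $\bigcap_{B\in\mathcal{B}_x}\Phi_\alpha(B)$. Applying the compatibility lemma to all $B\in\mathcal{B}_x$ and intersecting yields $\ran(\Omega_\alpha(x))=\Upsilon_\alpha\big(\bigcap_{B}\ran(B)\big)=\Upsilon_\alpha(\ran(x))$ and likewise for the source, which gives both displayed formulas simultaneously and independence of $B$ for free. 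Alternatively, within your setup: your independence argument shows that each of the two one-sided products lies in $\Phi_\alpha(B)$ for \emph{every} $B\in\mathcal{B}_x$, hence in $\bigcap_{B\in\mathcal{B}_x}\Phi_\alpha(B)$; the source of this intersection is the single point $\Upsilon_\alpha(\sou(x))$ because $\bigcap_{B\in\mathcal{B}_x}\sou(B)=\{\sou(x)\}$, and since the intersection sits inside a bisection it is a singleton, forcing the two products to coincide. Either way, you must let $B$ shrink to $x$; the lemma applied to one fixed $B$ is not enough.
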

\begin{proof}
We divide the proof into claims. For the first one, we recall
that if $A,B\subseteq \calG$ are compact, open bisections, then $A\subseteq B$ if and only if $A = B A^{-1} A$.

\medskip

\textbf{Claim 1:} \emph{$\Omega_\alpha$ is well-defined.}
Let $x\in \calG$, and let $$\mathcal{B}_x = \{B \in \Bco(\calG) \colon x \in B\}.$$ Since $\Phi_\alpha$ is an isomorphism, it follows by what we have just recalled that given $A, B \in \Bco(\calG)$, we have $A \subseteq B$ if and only if $\Phi_\alpha (A)  \subseteq \Phi_\alpha (B)$. In particular, the intersection $\bigcap_{B \in \mathcal{B}_x} \Phi_\alpha(B)$ is nonempty and in fact consists of a single point that we define to be $\Omega_\alpha(x)$. We shall show that $\Omega_\alpha(x)$ is given by the expression stated in the proposition. It follows by \cref{lma: compatibility of the induced homomorphisms from an automorphism} that

\begin{align*}
	\mathsf{r}(\Omega_\alpha(x)) &= \mathsf{r}\Big(\bigcap_{B \in \mathcal{B}_x} \Phi_\alpha(B)\Big) \\
	&= \bigcap_{B \in \mathcal{B}_x} \mathsf{r}(\Phi_\alpha(B)) \\
	&= \bigcap_{B \in \mathcal{B}_x} \Upsilon_\alpha(r(B)) \\
	&= \Upsilon_\alpha \Big(\bigcap_{B \in \mathcal{B}_x} \mathsf{r}(B)\Big) = \Upsilon_\alpha(\mathsf{r}(x)),
\end{align*}
and likewise, $\mathsf{s}(\Omega_\alpha(x)) = \Upsilon_\alpha(\mathsf{s}(x))$. From this, it is clear that given any $B \in \mathcal{B}_x$, $\Phi_\alpha(B)$ contains $\Omega_\alpha(x)$, and $$ \Omega_\alpha(x) = \Upsilon_\alpha(\mathsf{r}(x)) \Phi_\alpha(B) = \Phi_\alpha(B)  \Upsilon_\alpha(s(x)) ,$$ which proves the claim.

\medskip

\textbf{Claim 2:} \emph{$\Omega_\alpha$ is a (topological) automorphism of $\calG$.}
Given $(x,y) \in \calG^{(2)}$ and bisections $A,B \in \Bco(\calG)$ such that $x \in A$ and $y \in B$, we 
use at the second step that~$\Phi_\alpha$ is multiplicative to get
\begin{align*}
	\Omega_\alpha(xy) &= \Phi_\alpha(AB) \Upsilon_\alpha (\sou(xy)) \\
	&= \Phi_\alpha(A) \Phi_\alpha(B) \Upsilon_\alpha (\sou(y)) \\
	&= \Phi_\alpha(A) \Upsilon_\alpha (\ran(y)) \Phi_\alpha(B) \Upsilon_\alpha (\sou(y)) \\
	&= \Phi_\alpha(A) \Upsilon_\alpha (\sou(x)) \Phi_\alpha(B) \Upsilon_\alpha (\sou(y)) \\
	&= \Omega_\alpha(x) \Omega_\alpha(y) .
\end{align*}
In order to check that $\Omega_\alpha$ is continuous, let $(x_j)_{j
\in J}$ be a net in $\calG$ converging to~$x$. 
Given $B \in \Bco(\calG)$ containing $x$, there exists $j_0$ such that $j \geq j_0$ implies $x_j \in B$. Thus, 
\[\lim_j \Omega_\alpha(x_j) = \lim_j \Phi_\alpha(B) \Upsilon_\alpha (\sou(x_j)) = \Phi_\alpha(B) \Upsilon_\alpha (\sou(x)) = \Omega_\alpha(x),
\]
and hence $\Omega_\alpha$ is continuous. Finally, for $x\in\calG$ we have
\begin{align*} \Omega_{\alpha^{-1}}(\Omega_\alpha(x)) &= \Omega_{\alpha^{-1}} \big(\Phi_\alpha(B) \Upsilon_\alpha (\sou(x))\big) \\
&= \Phi_\alpha^{-1} (\Phi_\alpha(B)) \Upsilon_\alpha^{-1} (\Upsilon_\alpha (\sou(x))) \\
&= B \sou(x) = x ,\end{align*}
showing that $\Omega_{\alpha^{-1}}$ is the inverse of $\Omega_\alpha$. 
This proves the claim.

\medskip

\textbf{Claim 3:} \emph{The resulting map $\Omega$ is a group homomorphism.}
To prove the claim, let $\alpha,\beta\in \aut(\frakA(\calG))$, let $x\in \calG$ and let $B\in\Bco(\calG)$ contain $x$. 
Using at the second step that $\Phi_\alpha(B)\in \Bco(\calG)$ contains $\Omega_\alpha(x)$ and $\mathsf{s}\big(\Omega_\alpha(x)\big)=\Upsilon_\alpha(\sou(x))$, and at the last step that both $\Phi$ and $\Upsilon$ are homomorphisms, we get
\[\Omega_\beta (\Omega_\alpha (x)) = \Omega_\beta \big(\Phi_\alpha(B) \Upsilon_\alpha(\sou(x))\big) = \Phi_\beta(\Phi_\alpha(B)) \Upsilon_\beta(\Upsilon_\alpha(\sou(x)))  = \Omega_{\beta \circ \alpha} (x) ,\] as desired.
\end{proof}

The following is the second main result of this work, which should be compared
to part~(i) of \cite[Proposition~5.7]{Mat12HomTopFullGp}. 
Note that Matui needs to use the Weyl groupoid from \cite{Ren08Cartan} to produce some of the homomorphisms therein, because of which he only considers automorphisms preserving the diagonal. In contrast, we instead rely on the tight groupoid from \cite{Exe10ReconstrTotDiscGpds} and the results from \cite{GarPalThi25arX:RigidityAmpleGpds} to produce the homomorphisms, which means that we need not put any restrictions on the automorphism groups. (We do, however, need to assume effectiveness for the computation of the outer automorphism group, but this is to be expected.)

In the next result, for a Hausdorff, ample groupoid $\calG$ with compact unit space, we will naturally regard $\F(\calG)$ as a subgroup of $\aut(\calG)$ by identifying a full bisection $B\in \F(\calG)$ with the automorphism $\ad(B) (x) = BxB^{-1}$, for $x\in\calG$.

\begin{thm} 
\label{prp:exact sequences of automorphism groups}
Let $\calG$ be a Hausdorff ample, effective groupoid, and let
$\frakA(\calG)$ denote either $L^I (\calG)$, $\Fpast(\calG)$, or $\Fp(\calG)$, with $p \in [1, \infty) \setminus \{2\}$. 
Then the canonical sequence 
\[\tag{\textbf{A}}
	\begin{tikzcd}
		1 \arrow{r}{} & \Zycles (\calG, \TT) \arrow{r}{\Gamma} & \aut(\frakA(\calG)) \arrow{r}{\Omega} & \aut(\calG) \arrow{r}{} & 1 
	\end{tikzcd}
	\]
is split exact.	If, in addition, $\calG$ is effective and has compact unit space, then there are induced split exact sequences
	\[\tag{\textbf{I}}
	\begin{tikzcd}
		1 \arrow{r}{} & \Bound (\calG, \TT) \arrow{r}{\Gamma} & \inn(\frakA(\calG)) \arrow{r}{\Omega} & \F(\calG) \arrow{r}{} & 1 ,
	\end{tikzcd}
	\]
	and 
	\[\tag{\textbf{O}}
	\begin{tikzcd}
		1 \arrow{r}{} & \Hquot (\calG, \TT) \arrow{r}{\overline{\Gamma}} & \out(\frakA(\calG)) \arrow{r}{\overline{\Omega}} & \aut(\calG)/ \F(\calG) \arrow{r}{} & 1.
	\end{tikzcd}
	\]
\end{thm}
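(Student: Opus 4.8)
The plan is to prove the three sequences in order, establishing (\textbf{A}) from scratch and then deriving (\textbf{I}) and (\textbf{O}) as a restriction and a quotient of (\textbf{A}). For (\textbf{A}), injectivity of $\Gamma$ is immediate: if $\Gamma(\xi)=\id$ then $\xi(x)f(x)=f(x)$ for all $f\in C_c(\calG)$ and $x\in\calG$, forcing $\xi\equiv 1$. The inclusion $\operatorname{im}\Gamma\subseteq\ker\Omega$ follows because multiplication by a $\TT$-valued cocycle preserves supports and fixes $C_0(\calG^{(0)})$ (a groupoid homomorphism to $\TT$ is trivial on units), so $\Phi_{\Gamma(\xi)}=\id$ and $\Upsilon_{\Gamma(\xi)}=\id$, whence $\Omega_{\Gamma(\xi)}=\id$ by \cref{prop:Omega}. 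For surjectivity of $\Omega$ and the splitting simultaneously, I would send a groupoid automorphism $\theta\in\aut(\calG)$ to the composition automorphism $\alpha_\theta(f)=f\circ\theta^{-1}$ on $C_c(\calG)$; since $\theta$ preserves the convolution and the intrinsically defined norm, $\alpha_\theta$ extends to an isometric automorphism of $\frakA(\calG)$, and a direct computation gives $\Phi_{\alpha_\theta}(B)=\theta(B)$ and $\Upsilon_{\alpha_\theta}=\theta|_{\calG^{(0)}}$, hence $\Omega_{\alpha_\theta}=\theta$. Thus $\theta\mapsto\alpha_\theta$ is a section of $\Omega$, proving both surjectivity and the splitting of (\textbf{A}), modulo the reverse kernel inclusion.

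The heart of (\textbf{A}) is showing $\ker\Omega\subseteq\operatorname{im}\Gamma$. Given $\alpha$ with $\Omega_\alpha=\id$, restricting to units yields $\Upsilon_\alpha=\id$, and then $\Omega_\alpha(x)=\Phi_\alpha(B)\Upsilon_\alpha(\sou(x))=x$ forces $\Phi_\alpha(B)=B$ for every $B\in\Bco(\calG)$; that is, $\alpha(\mathbbm{1}_B)$ is a spatial partial isometry supported on $B$ and hence $\TT$-valued there by \cref{prp:UFpGpd}. I would then define $\xi\colon\calG\to\TT$ by $\xi(x)=\alpha(\mathbbm{1}_B)(x)$ for any $B\in\Bco(\calG)$ containing $x$. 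Well-definedness follows by localizing with the idempotents $\mathbbm{1}_U$, $U\subseteq\calG^{(0)}$ clopen (which $\alpha$ fixes since $\Upsilon_\alpha=\id$), via $\mathbbm{1}_{B_1\cap B_2}=\mathbbm{1}_{B_1}\mathbbm{1}_{\sou(B_1\cap B_2)}$; continuity of $\xi$ is inherited from that of each $\alpha(\mathbbm{1}_B)$ on the open set $B$; and the cocycle identity $\xi(xy)=\xi(x)\xi(y)$ comes from applying $\alpha$ to $\mathbbm{1}_{AB}=\mathbbm{1}_A\mathbbm{1}_B$ and evaluating at $xy$. Finally $\alpha=\Gamma(\xi)$ is checked on the dense span of the $\mathbbm{1}_B$ (using ampleness), completing exactness of (\textbf{A}).

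To obtain (\textbf{I}), I would restrict the maps of (\textbf{A}) to inner automorphisms. By \cref{prp:UFpGpd} every invertible isometry is $f\ast\mathbbm{1}_B$ with $f\in C(\calG^{(0)},\TT)$ and $B\in\F(\calG)$, and a conjugation computation shows $\Omega_{\ad(f\ast\mathbbm{1}_B)}=\ad(B)$, the image of $B$ under $\F(\calG)\hookrightarrow\aut(\calG)$; this yields $\Omega(\inn(\frakA(\calG)))=\F(\calG)$ and the section $B\mapsto\ad(\mathbbm{1}_B)$. For the kernel, $\Omega_{\ad(f\ast\mathbbm{1}_B)}=\id$ forces $\ad(B)=\id$ in $\aut(\calG)$, and here effectiveness is essential: injectivity of $\rho\colon\F(\calG)\to\mathrm{Homeo}(\calG^{(0)})$ (see \cref{dfn: topological full groups}) gives $B=\calG^{(0)}$, so $\mathfrak{u}=f\in C(\calG^{(0)},\TT)$ and $\ad(f)=\Gamma(\widehat{\,\overline{f}\,})$ is exactly a coboundary automorphism. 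Combined with $\Gamma(\Bound(\calG,\TT))\subseteq\inn(\frakA(\calG))$ (each coboundary automorphism is conjugation by an element of $C(\calG^{(0)},\TT)$), this gives split exactness of (\textbf{I}).

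Finally, I would read (\textbf{O}) as the quotient of (\textbf{A}) by (\textbf{I}): the inclusions $\Bound\hookrightarrow\Zycles$, $\inn\hookrightarrow\aut$, and $\F(\calG)\hookrightarrow\aut(\calG)$ form a map of split exact sequences of groups, using normality of $\inn(\frakA(\calG))$ in $\aut(\frakA(\calG))$, of $\Bound(\calG,\TT)$ in the abelian $\Zycles(\calG,\TT)$, and of $\F(\calG)$ in $\aut(\calG)$ via $\phi\,\ad(B)\,\phi^{-1}=\ad(\phi(B))$. A diagram chase then produces the exact quotient sequence with $\overline{\Gamma}$ and $\overline{\Omega}$; the key points are that $\Gamma(\xi)$ is inner iff $\xi\in\Bound(\calG,\TT)$ (injectivity of $\overline{\Gamma}$, which via $\Omega\Gamma=1$ and effectiveness reduces to the kernel computation of (\textbf{I})) and that $\overline{\Omega}[\alpha]=1$ lifts through the section of (\textbf{I}) to realize $[\alpha]$ as a cocycle class (exactness at $\out(\frakA(\calG))$). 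The splitting of (\textbf{O}) descends from that of (\textbf{A}), since $\alpha_{\ad(B)}=\ad(\mathbbm{1}_B)$ is inner, so $\theta\mapsto\alpha_\theta$ carries $\F(\calG)$ into $\inn(\frakA(\calG))$. \textbf{The main obstacle} is the kernel identification in (\textbf{A}): assembling the local phases $\alpha(\mathbbm{1}_B)|_B$ into a single continuous groupoid homomorphism $\xi$ and verifying $\alpha=\Gamma(\xi)$. Once this is in place, (\textbf{I}) and (\textbf{O}) follow by essentially formal—if careful—bookkeeping, with effectiveness entering only to control the embedding $\F(\calG)\subseteq\aut(\calG)$.
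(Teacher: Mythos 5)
Your proposal is correct and follows essentially the same route as the paper's proof: the functorial section $\theta\mapsto\alpha_\theta$, the assembly of the local phases $\alpha(\mathbbm{1}_B)|_B$ into a cocycle $\xi$ for the kernel of $\Omega$ in (\textbf{A}), the identification $\Omega_{\ad(f\ast\mathbbm{1}_B)}=\ad(B)$ together with effectiveness for (\textbf{I}), and the passage to quotients for (\textbf{O}). The only (harmless) organizational differences are that you verify well-definedness of $\xi$ by localizing with the idempotents $\mathbbm{1}_U$ rather than the paper's explicit four-factor product computation, and you deduce injectivity of $\overline{\Gamma}$ from $\Omega\circ\Gamma=1$ plus the kernel computation of (\textbf{I}) instead of redoing a direct computation.
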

\begin{proof}
We say a few words about what needs to be proved. For the sequence (\textbf{A}) we need to show exactness and also construct a split. For (\textbf{I}), we first need to show that the maps are well-defined (namely, $\Gamma$ maps the 1-coboundaries to inner automorphisms, and $\Omega$ maps the inner automorphisms to full bisections), and both exactness and splitting also need to be established. Finally, for (\textbf{O}) only split-exactness needs to be shown.
We again divide the proof into claims, following the above discussion. 

\medskip

\textbf{Claim 1:} \textit{There is a multiplicative section for 
	$\Omega\colon\aut(\frakA(\calG))\to\aut(\calG)$.}
Let $\theta \in \aut(\calG)$ be given. Functoriality of the construction of $\frakA(\calG)$ implies that there is an automorphism $\alpha_\theta\in\aut(\frakA(\calG))$
such that $\alpha_\theta(f)(x)=f(\theta^{-1}(x))$ for all $f\in C_c(\calG)$ and 
all $x\in\calG$. By construction we have $\Upsilon_{\alpha_\theta}=\theta$. Moreover, denoting by $\theta_\ast\in\aut(\Bco(\calG))$ the automorphism given by
$\theta_\ast(B) = \theta(B)$, for $B \in \Bco(\calG)$, we also get $\Phi_{\alpha_\theta}=\theta_\ast$.
It follows that $\Omega_{\alpha_\theta} (x) = \theta(B) \theta(\sou(x)) = \theta(B \sou(x)) = \theta(x)$, and hence $\Omega_{\alpha_\theta}=\theta$. Naturality of the 
construction implies the map 
$s\colon \aut(\calG)\to \aut(\frakA(\calG))$ given by $s(\theta)=\alpha_\theta$ for 
$\theta\in \aut(\calG)$, is a group homomorphism, as desired.

\medskip

\textbf{Claim 2:}  \textit{The sequence (\textbf{A}) is exact.} 
Note that Claim~1 implies that $\Omega$ is surjective.
It is clear from the construction of $\Gamma$ (see \cref{dfn:Gamma}) that it is injective. We now check exactness at $\aut(\frakA(\calG))$.
It follows from the formula for $\Omega_\alpha$ (see 
\cref{prop:Omega}) that if $\alpha \in \aut(\frakA(\calG))$ satisfies $\Phi_\alpha = \id_{\Bco(\calG)}$ and $\Upsilon_\alpha = \id_{\calG^{(0)}}$, then $\Omega_\alpha = \id_{\calG}$. 
Thus, in order to show that $\Omega\circ\Gamma=\id_{\calG}$, given $\xi \in \Zycles (\calG, \TT)$ it suffices to show that $\Phi_{\Gamma(\xi)}=\id_{\Bco(\calG)}$ and $\Upsilon_{\Gamma(\xi)}=\id_{\calG^{(0)}}$. 
To see that this is the case, note that for $f \in C_0 (\calG^{(0)})$ we have 
\[\Gamma(\xi) (f) (x) = \begin{cases} \xi (x) f(x) = f(x), & \mbox{ if } x \in \calG^{(0)},\\
0, & \mbox{ else.}
                \end{cases}\]
In other words, $\Gamma (\xi)$ is the identity on $C_0 (\calG^{(0)})$, and thus $\Upsilon_{\Gamma(\xi)} = \id_{\calG^{(0)}}$.
On the other hand, for $B\in\Bco(\calG)$, let $\xi_B\colon \calG^{(0)}\to \mathbb{C}$ be given by 
\[\xi_B(\ran(x)) = \begin{cases} \xi(x) , & \mbox{ if } x \in B,\\
	0, & \mbox{ else.}
\end{cases}\]
For a continuous function $f\colon \ran(B)\to \mathbb{T}$, recall that
$f \ast \mathbbm{1}_B$ belongs to $\PIMP(\frakA(\calG))$. In this case, we have \[\Gamma(\xi) (f \ast \mathbbm{1}_B) = (f \cdot \xi_B) \ast \mathbbm{1}_B.\] 
It follows then that \[\sigma(\Gamma(\xi) (\mathbbm{1}_B)) = \sigma(\xi_B \ast \mathbbm{1}_B) = B ,\] and hence 
$\Phi_{\Gamma(\xi)}=\id_{\Bco(\calG)}$ for all $B \in \Bco(\calG)$. We conclude that $\Omega\circ\Gamma=\id_{\calG}$. 
	
Let us see next that $\ker(\Omega) \subseteq \mathrm{im}(\Gamma)$. Let $\alpha \in \aut(\frakA(\calG))$ satisfy $\Omega_\alpha = \id_\calG$. 
The definition of $\Omega$ (see \cref{prop:Omega}) shows that $\Omega_\alpha(u) = u$ for any $u \in \calG^{(0)}$, and therefore $\alpha|_{C_0 (\calG^{(0)})} = \id_{C_0 (\calG^{(0)})}$. In order to show that $\alpha$ is in the image of $\Gamma$,
we define a map $\xi \colon \calG \to \TT$ as follows: given $x \in \calG$, find any $A \in \Bco(\calG)$ such that $x \in A$, and set $\xi(x) = \alpha (\mathbbm{1}_A)(x)$. 

Let us check that $\xi$ is well defined; we begin by showing that it is independent
of~$A$.
First, it is not hard to check that $\Omega_\alpha = \id_\calG$ implies $\Phi_\alpha = \id_{\Bco(\calG)}$. By the definition of $\Phi_\alpha$ (see \cref{dfn:Phi}), it follows that for every $A\in\Bco(\calG)$ there exists $f_A \in C(\ran(B), \TT)$ such that $\alpha(\mathbbm{1}_A) = f_A \ast \mathbbm{1}_A$. Note that
\[\tag{3.1}\alpha(\mathbbm{1}_A) (x) = f_{A}(\ran(x)) \in \TT,\] 
and our task is to show that the above expression does not depend on the compact open bisection $A$ containing $x$.
Suppose now that $A,B \in \Bco(\calG)$ both contain $x$. Then 
\[\mathbbm{1}_{\ran(A \cap B) } \mathbbm{1}_{B} \mathbbm{1}_{A^{-1}} \mathbbm{1}_{\ran(A \cap B)} = \mathbbm{1}_{\ran(A \cap B) } \mathbbm{1}_{A \cap B } \mathbbm{1}_{(A \cap B)^{-1}} \mathbbm{1}_{\ran(A \cap B) } \] 
belongs to $C_0 (\calG^{(0)})$, so that 
\begin{align*}
1 &= \mathbbm{1}_{\ran(A \cap B) } \mathbbm{1}_{B} \mathbbm{1}_{A^{-1}} \mathbbm{1}_{\ran(A \cap B)} (\ran(x)) \\
&= \alpha(\mathbbm{1}_{\ran(A \cap B) } \mathbbm{1}_{B} \mathbbm{1}_{A^{-1}} \mathbbm{1}_{\ran(A \cap B)}) (\ran(x)) \\
&= \alpha(\mathbbm{1}_{\ran(A \cap B) } \mathbbm{1}_{B} \mathbbm{1}_{\sou(A \cap B)}) (x) \alpha(\mathbbm{1}_{\ran(A \cap B)} \mathbbm{1}_{A} \mathbbm{1}_{\sou(A \cap B)} )^{\dag} (x^{-1}) \\
&= f_{B}(\ran(x)) \overline{f_{A} (\ran(x))} ,
\end{align*}
	which implies that $$ f_{A} (\ran(x)) = f_{B} (\ran(x)) ,$$ as desired. Thus $\xi$ is a well-defined function, and its range is contained in $\TT$ by (3.1). 
Continuity of the maps involved implies that $\xi$ is continuous. 
To see that~$\xi$ is a homomorphism, fix a composable pair $(x,y) \in \calG^{(2)}$. Let $A,B \in \Bco(\calG)$ satisfy $x \in A$ and $y \in B$. Then $xy \in AB$ with $AB \in \Bco(\calG)$, and
	\begin{align*}
			\xi(xy) &= \alpha(\mathbbm{1}_{AB})(xy) \\
			&= \alpha(\mathbbm{1}_A \ast \mathbbm{1}_B) (xy) \\
			& = \alpha(\mathbbm{1}_A) \ast \alpha(\mathbbm{1}_B) (xy) \\
			&= \alpha(\mathbbm{1}_A)(x) \alpha(\mathbbm{1}_B)(y) = \xi(x) \xi(y),
	\end{align*}
	which shows that $\xi$ is indeed a homomorphism.

Given $A\in \Bco(\calG)$ and a continuous function $f \colon \ran(A) \to \CC$, one can readily check that $\Gamma(\xi) (f \ast \mathbbm{1}_A ) = \alpha (f \ast \mathbbm{1}_A)$. Since the span of the elements of the form $f\ast\mathbbm{1}_A$ is dense in $\frakA(\calG)$, continuity of $\Gamma(\xi)$ and $\alpha$ imply that $\Gamma(\xi) = \alpha$. We conclude that $\ker(\Omega) \subseteq \mathrm{im}(\Gamma)$, showing that the sequence is indeed exact.

\medskip

Let us, from now on and until the end of the proof, suppose that $\calG$ is effective and has compact unit space.

\medskip

\textbf{Claim 3:} \textit{The sequence \textbf{(I)} is well-defined.}
Recall that we are identifying $\F(\calG)$ with a subgroup of $\aut(\calG)$ as explained before the statement of this theorem. Recall also that any invertible isometry in $\frakA(\calG)$ has the form $ f \ast \mathbbm{1}_B$, for some $f \in C(\calG^{(0)}, \TT)$ and $B \in \F(\calG)$; see \cref{prp:UFpGpd}. 
We need to show that $\Gamma(\Bound(\calG,\TT))\subseteq \inn(\frakA(\calG))$ and 
that $\Omega(\inn(\frakA(\calG)))\subseteq \F(\calG)$. 

For the first of these, let $f \in C(\calG^{(0)}, \TT)\subseteq \invisom(\frakA(\calG))$ be given, and recall that we write $\hat{f} \in B^1 (\calG, \TT)$ for the 1-coboundary given by $\hat{f}(x) = \overline{f(\ran(x))} f(\sou(x))$ for $x \in \calG$. For $g \in C_c (\calG)$ and $x\in \calG$, we have 
\[ \Gamma(\hat{f}) (g) (x) = \hat{f}(x) g(x) = \overline{f(\ran(x))} f(\sou(x)) g(x) = (\overline{f} \ast g \ast f) (x) = \ad (f) (g) (x).\] 
By density and continuity, we deduce that $\Gamma(\hat{f}) = \ad(f)$. Thus, $\Gamma$ maps $\Bound (\calG, \TT)$ into $\inn(\frakA(\calG))$. 

Now, to check that $\Omega(\inn(\frakA(\calG)))\subseteq \F(\calG)$, let $\alpha \in \inn(\frakA(\calG))$ be given. By \cref{prp:UFpGpd} there exist $B \in \F(\calG)$ and $f \in C(\calG^{(0)}, \TT)$ so that $ \alpha = \ad(f \ast \mathbbm{1}_B)$. Then for any $A \in \Bco(\calG)$, we have $$ \alpha (\mathbbm{1}_A) = \mathbbm{1}_{B^{-1}} \ast \overline{f} \ast \mathbbm{1}_A \ast f \ast \mathbbm{1}_B = (\overline{f} \circ \rho_B) \ast \mathbbm{1}_{B^{-1}AB} \ast (f \circ \rho_B) .$$ Therefore, $\Phi_{\alpha} (A) = B^{-1}AB$, and it is then not hard to see that $\Upsilon_{\alpha} = \rho_{B}^{-1}$. Now, for $x \in \calG$ and $A \in \Bco(\calG)$ with $x \in A$, we have $$\Omega_{\alpha} (x) = B^{-1}AB \rho_{B}^{-1} (\sou(x)) = B^{-1}AB B^{-1} \sou(x) B = B^{-1}A \sou(x) B = B^{-1} x B ,$$ and hence we conclude that $\Omega_{\alpha} = \ad(B) \in \F(\calG)$, as desired.

\medskip

\textbf{Claim 4:} \textit{The sequence (\textbf{I}) is split exact.}
We begin by noting that the section $s\colon \aut(\calG)\to\aut(\frakA(\calG))$ constructed in Claim~1 satisfies $s(\F(\calG))\subseteq \inn(\frakA(\calG))$; indeed,
it is routine to check that for $B\in\F(\calG)$, we have 
$s(\ad(B))=\ad(\mathbbm{1}_B)\in\inn(\frakA(\calG))$. 

Next, we show exactness. The restriction of $\Gamma$ is injective because $\Gamma$ is, and we have $\Gamma(\Bound(\calG,\TT)) \subseteq \ker(\Omega)$ by Claim~2. Let us show next that $\ker(\Omega) \subseteq \mathrm{im}(\Gamma)$. Let $\alpha\in\aut(\frakA(\calG))$ and assume that $\Omega_\alpha = 1_{\aut(\calG)}$. Choose $f \in C(\calG^{(0)}, \TT)$ and $A \in \F(\calG)$ with $\alpha =  \ad (f \ast \mathbbm{1}_A)$. Given $B \in \Bco(\calG)$, recall from Claim~3 that $\Phi_\alpha (B) = A^{-1} B A$, and $\Upsilon_\alpha = \rho_{A}^{-1}$. Thus, 
\[ x = \Omega_\alpha (x) = A^{-1}BA \rho_{A}^{-1} (\sou(x)) = A^{-1}BA A^{-1} \sou(x)A = A^{-1}B \sou(x) A = A^{-1}xA ,\] 
or equivalently 
$ Ax = xA,$ for all $x \in \calG$. In particular, $Au = uA$ for all $u \in \calG^{(0)}$, and thus $A \subseteq \rm{Iso}(\calG)^{\circ}$. Since $\calG$ is effective, we have $\rm{Iso}(\calG)^{\circ} = \calG^{(0)}$, and hence $B = \calG^{(0)}$. We conclude that $\alpha = \ad(f) = \Gamma(\hat{f})$, showing that $\ker(\Omega) \subseteq \rm{im}(\Gamma)$, and proving that the sequence (\textbf{I}) is split exact. 

\medskip
	
\textbf{Claim 5:} \textit{The sequence \textbf{(O)} is split exact.}
It follows from exactness of \textbf{(A)} and \textbf{(I)} that \textbf{(O)} is well defined. We also have $\rm{im}(\overline{\Gamma}) \subseteq \ker(\overline{\Omega})$. Moreover, if $s\colon \aut(\calG)\to\aut(\frakA(\calG))$ is the section for $\Omega$ constructed in Claim~1, which was shown in Claim~4 to be a section for $\inn(\frakA(\calG))\to \F(\calG)$, then $s$ canonically induces a well defined map $\overline{s}\colon \aut(\calG)/ \F(\calG) \to \out(\frakA(\calG))$ that satisfies $\overline{\Omega} \circ \overline{s} = \id_{\aut(\calG)/ \F(\calG)}$, so that \textbf{(O)} splits. 

In order to see that $\overline{\Gamma}$ is injective, let $\xi\in \Zycles(\calG,\TT)$ satisfy $\Gamma (\xi) \in \inn(\frakA(\calG))$. Choose $f \in C(\calG^{(0)}, \TT)$ and $B \in \F(\calG)$ with $\Gamma(\xi) = \ad(f \ast \mathbbm{1}_B)$. We will show that $\xi=\widehat{f}$. Given $A \in \Bco(\calG)$ and $x \in \calG$, we have that
\begin{align*}  \xi(x)  \mathbbm{1}_A(x) &=\big(\Gamma(\xi)  \mathbbm{1}_A\big)(x) \\
	&= \ad(f\ast\mathbbm{1}_B)( \mathbbm{1}_A)(x)\\
	&= (\overline{f}\circ\rho_B) \ast \mathbbm{1}_{B^{-1}AB} \ast (f \circ \rho_B) (x).
\end{align*} 
Since $\xi$ and $\overline{f} \circ \rho_B (\mathsf{r}(x)) f \circ \rho_B (\mathsf{s}(x))$ take values in $\TT$, we may conclude that $BA = AB$ for all $A \in \Bco(\calG)$, from which it follows that $Bu = uB$ for all $u\in \calG^{(0)}$. Hence $B \subseteq \rm{Iso}(\calG)^{\circ} = \calG^{(0)}$, which implies that $B = \calG^{(0)}$. We conclude that $\Gamma (\xi) = \ad(f) = \Gamma (\hat{f})$, so that $\xi = \hat{f}$ by injectivity of $\Gamma$, proving in turn injectivity of $\overline{\Gamma}$. 

Finally, let us show that $\ker(\overline{\Omega}) \subseteq \rm{im}(\overline{\Gamma})$. Recall that $s(\ad(B)) = \ad(\mathbbm{1}_B)$, and that $\Omega_{\ad(\mathbbm{1}_B)} = \ad(B)$, for every $B\in\F(\calG)$.
Using this, we see that if $\alpha \in \aut(\frakA(\calG))$ satisfies $\Omega_\alpha = \ad(B)$ for some $B \in \F(\calG)$, then 
\[\ad(\mathbbm{1}_B)^{-1}\circ \alpha \in \ker(\Omega) = \rm{im}(\Gamma) ,\] so that there exists $\xi \in \Zycles (\calG, \TT)$ satisfying $\Gamma(\xi) = \ad(\mathbbm{1}_B)^{-1} \circ \alpha$. In this case, since
$\alpha \circ \Gamma(\xi)^{-1} = \ad(\mathbbm{1}_B)$ belongs to $\inn(\frakA(\calG))$, we get
\[\alpha \cdot \inn(\frakA(\calG)) = \Gamma(\xi)\cdot \inn(\frakA(\calG)) .\] 
In other words, $\overline{\Gamma}(\xi \cdot \Bound (\calG , \TT)) = \alpha \cdot \inn(\frakA(\calG))$, showing that $\ker(\overline{\Omega}) \subseteq \rm{im}(\overline{\Gamma})$, which finishes the proof that \textbf{(O)} is split exact.
\end{proof}


\begin{thebibliography}{BKM25}
	
	\bibitem[AO22]{AusOrt22GpdsHermBAlg}
	\bgroup\scshape{}A.~Austad\egroup{} and \bgroup\scshape{}E.~Ortega\egroup{},
	Groupoids and {H}ermitian {B}anach {$*$}-algebras,  \emph{Internat. J. Math.}
	\textbf{33} (2022), Paper No.~2250090, 25p.
	
	\bibitem[BKM25]{BarKwaMck25BAlgTwGpdInvSgpDisintLp}
	\bgroup\scshape{}K.~Bardadyn\egroup{},
	\bgroup\scshape{}B.~Kwa\'{s}niewski\egroup{}, and
	\bgroup\scshape{}A.~McKee\egroup{}, Banach algebras associated to twisted
	\'{e}tale groupoids: inverse semigroup disintegration and representations on
	{$L^p$}-spaces,  \emph{J. Funct. Anal.} \textbf{289} (2025), Paper No.
	111163, 66p.
	
	\bibitem[BK23]{BarKwa23arX:TopFreeActionsIdlsTwBAlgCrProd}
	\bgroup\scshape{}K.~Bardadyn\egroup{} and \bgroup\scshape{}B.~K.
	Kwaśniewski\egroup{}, Topologically free actions and ideals in twisted
	{B}anach algebra crossed products, preprint (arXiv:2307.01685), 2023.
	
	\bibitem[Bri04]{Bri04HigherDimThompsonGps}
	\bgroup\scshape{}M.~G. Brin\egroup{}, Higher dimensional {T}hompson groups,
	\emph{Geom. Dedicata} \textbf{108} (2004), 163--192.
	
	\bibitem[BS19]{BriSca19CSimpleReprTopFullGps}
	\bgroup\scshape{}K.~A. Brix\egroup{} and \bgroup\scshape{}E.~Scarparo\egroup{},
	{${\rm C}^*$}-simplicity and representations of topological full groups of
	groupoids,  \emph{J. Funct. Anal.} \textbf{277} (2019), 2981--2996.
	
	\bibitem[CGT24]{ChoGarThi24LpRigidity}
	\bgroup\scshape{}Y.~Choi\egroup{}, \bgroup\scshape{}E.~Gardella\egroup{}, and
	\bgroup\scshape{}H.~Thiel\egroup{}, Rigidity results for {$L^p$}-operator
	algebras and applications,  \emph{Adv. Math.} \textbf{452} (2024), Paper No.
	109747, 47~pages.
	
	\bibitem[CJN20]{ChoJusNek20TopFullGpsZdActions}
	\bgroup\scshape{}M.~Chornyi\egroup{}, \bgroup\scshape{}K.~Juschenko\egroup{},
	and \bgroup\scshape{}V.~Nekrashevych\egroup{}, On topological full groups of
	{$\Bbb Z^d$}-actions,  \emph{Groups Geom. Dyn.} \textbf{14} (2020), 61--79.
	
	\bibitem[Elk25]{Elk25SymmPseudofctLp}
	\bgroup\scshape{}E.~M. Elki{\ae}r\egroup{}, Symmetrized pseudofunction algebras
	from {$L^p$}-representations and amenability of locally compact groups,
	\emph{Expo. Math.} \textbf{43} (2025), Paper No. 125685, 20~pages.
	
	\bibitem[EP23]{ElkPoo23arX:PropTBAlg}
	\bgroup\scshape{}E.~M. Elki{\ae}r\egroup{} and
	\bgroup\scshape{}S.~Pooya\egroup{}, Property ({T}) for {B}anach algebras, J.
	Operator Theory (to appear), preprint (arXiv:2310.18136), 2023.
	
	\bibitem[Exe10]{Exe10ReconstrTotDiscGpds}
	\bgroup\scshape{}R.~Exel\egroup{}, Reconstructing a totally disconnected
	groupoid from its ample semigroup,  \emph{Proc. Amer. Math. Soc.}
	\textbf{138} (2010), 2991--3001.
	
	\bibitem[GPT25]{GarPalThi25arX:RigidityAmpleGpds}
	\bgroup\scshape{}E.~Gardella\egroup{},
	\bgroup\scshape{}M.~Palmstr{\o}m\egroup{}, and
	\bgroup\scshape{}H.~Thiel\egroup{}, Rigidity of pseudofunction algebras of
	ample groupoids, preprint (arXiv:2506.09563), 2025.
	
	\bibitem[GT23]{GarTan23arX:GenThompsonGpV}
	\bgroup\scshape{}E.~Gardella\egroup{} and \bgroup\scshape{}O.~Tanner\egroup{},
	Generalisations of {T}hompson's group {V} arising from purely infinite
	groupoids, preprint (arXiv:2302.04078), 2023.
	
	\bibitem[GG24]{GarGun24pre:EmbLpOpAlg}
	\bgroup\scshape{}E.~Gardella\egroup{} and
	\bgroup\scshape{}J.~Gundelach\egroup{}, Embeddings of {$L^p$}-operator
	algebras, in preparation, 2024.
	
	\bibitem[GL17]{GarLup17ReprGrpdLp}
	\bgroup\scshape{}E.~Gardella\egroup{} and \bgroup\scshape{}M.~Lupini\egroup{},
	Representations of \'{e}tale groupoids on {$L^p$}-spaces,  \emph{Adv. Math.}
	\textbf{318} (2017), 233--278.
	
	\bibitem[GT22]{GarThi22IsoConv}
	\bgroup\scshape{}E.~Gardella\egroup{} and \bgroup\scshape{}H.~Thiel\egroup{},
	Isomorphisms of algebras of convolution operators,  \emph{Ann. Sci. \'{E}c.
		Norm. Sup\'{e}r. (4)} \textbf{55} (2022), 1433--1471.
	
	\bibitem[GPS99]{GioPutSka99FullGpsCMS}
	\bgroup\scshape{}T.~Giordano\egroup{}, \bgroup\scshape{}I.~F. Putnam\egroup{},
	and \bgroup\scshape{}C.~F. Skau\egroup{}, Full groups of {C}antor minimal
	systems,  \emph{Israel J. Math.} \textbf{111} (1999), 285--320.
	
	\bibitem[Hah78]{Hah78RegReprMsrGpds}
	\bgroup\scshape{}P.~Hahn\egroup{}, The regular representations of measure
	groupoids,  \emph{Trans. Amer. Math. Soc.} \textbf{242} (1978), 35--72.
	
	\bibitem[JM13]{JusMon13CantorSystems}
	\bgroup\scshape{}K.~Juschenko\egroup{} and \bgroup\scshape{}N.~Monod\egroup{},
	Cantor systems, piecewise translations and simple amenable groups,
	\emph{Ann. of Math. (2)} \textbf{178} (2013), 775--787.
	
	\bibitem[Kri80]{Kri80DimClassHomeoGps}
	\bgroup\scshape{}W.~Krieger\egroup{}, On a dimension for a class of
	homeomorphism groups,  \emph{Math. Ann.} \textbf{252} (1979/80), 87--95.
	
	\bibitem[Li25]{Li25AmpleGpdsTopFullGpsAlgKThyInfLoopsSpaces}
	\bgroup\scshape{}X.~Li\egroup{}, Ample groupoids, topological full groups,
	algebraic {K}-theory spectra and infinite loop spaces,  \emph{Forum Math. Pi}
	\textbf{13} (2025), Paper No. e9, 56~pages.
	
	\bibitem[Mat06]{Mat06RmksTopFullGpsCMS}
	\bgroup\scshape{}H.~Matui\egroup{}, Some remarks on topological full groups of
	{C}antor minimal systems,  \emph{Internat. J. Math.} \textbf{17} (2006),
	231--251.
	
	\bibitem[Mat12]{Mat12HomTopFullGp}
	\bgroup\scshape{}H.~Matui\egroup{}, Homology and topological full groups of
	\'{e}tale groupoids on totally disconnected spaces,  \emph{Proc. Lond. Math.
		Soc. (3)} \textbf{104} (2012), 27--56.
	
	\bibitem[Mat15]{Mat15TopFullGpOneSidedShift}
	\bgroup\scshape{}H.~Matui\egroup{}, Topological full groups of one-sided shifts
	of finite type,  \emph{J. Reine Angew. Math.} \textbf{705} (2015), 35--84.
	
	\bibitem[Nek19]{Nek19SimpleGpsDynOrigin}
	\bgroup\scshape{}V.~Nekrashevych\egroup{}, Simple groups of dynamical origin,
	\emph{Ergodic Theory Dynam. Systems} \textbf{39} (2019), 707--732.
	
	\bibitem[Nek04]{Nek04CuntzPimsnerAlgsGpActions}
	\bgroup\scshape{}V.~V. Nekrashevych\egroup{}, Cuntz-{P}imsner algebras of group
	actions,  \emph{J. Operator Theory} \textbf{52} (2004), 223--249.
		
	\bibitem[Ng06]{Ng_amenability_2006}
	\bgroup\scshape{}P. Ng\egroup{}, Amenability of the sequence of unitary groups associated with a C*-algebra,  \emph{Indiana Univ. Math. J.} \textbf{55} (2006), 1389--1400.
	
	\bibitem[NO21a]{NylOrt21KEPGpdsAHConj}
	\bgroup\scshape{}P.~Nyland\egroup{} and \bgroup\scshape{}E.~Ortega\egroup{},
	Katsura-{E}xel-{P}ardo groupoids and the {AH} conjecture,  \emph{J. Lond.
		Math. Soc. (2)} \textbf{104} (2021), 2240--2259.
	
	\bibitem[NO21b]{NylOrt21MatuiAHConjGraphGpds}
	\bgroup\scshape{}P.~Nyland\egroup{} and \bgroup\scshape{}E.~Ortega\egroup{},
	Matui's {AH} conjecture for graph groupoids,  \emph{Doc. Math.} \textbf{26}
	(2021), 1679--1727.
	
	\bibitem[NO19]{NylOrt19TopFullGpsAmpleGpds}
	\bgroup\scshape{}P.~Nyland\egroup{} and \bgroup\scshape{}E.~Ortega\egroup{},
	Topological full groups of ample groupoids with applications to graph
	algebras,  \emph{Internat. J. Math.} \textbf{30} (2019), 1950018, 66~pages.
	
	\bibitem[Phi12]{Phi12arX:LpAnalogsCtz}
	\bgroup\scshape{}N.~C. Phillips\egroup{}, Analogs of {C}untz algebras on
	{$L^p$} spaces, preprint (arXiv:1201.4196 [math.FA]), 2012.
	
	\bibitem[Ren80]{Ren80GrpdApproach}
	\bgroup\scshape{}J.~Renault\egroup{}, \emph{A groupoid approach to \ca{s}},
	\emph{Lecture Notes in Mathematics} \textbf{793}, Springer, Berlin, 1980.
	
	\bibitem[Ren08]{Ren08Cartan}
	\bgroup\scshape{}J.~Renault\egroup{}, Cartan subalgebras in \ca{s},
	\emph{Irish Math. Soc. Bull.} (2008), 29--63.
	
	\bibitem[Ric67]{Ric67AmenGpsFixedPtPty}
	\bgroup\scshape{}N.~W. Rickert\egroup{}, Amenable groups and groups with the
	fixed point property,  \emph{Trans. Amer. Math. Soc.} \textbf{127} (1967),
	221--232.
	
	\bibitem[SW20]{SamWie20QuasiHermAmen}
	\bgroup\scshape{}E.~Samei\egroup{} and \bgroup\scshape{}M.~Wiersma\egroup{},
	Quasi-{H}ermitian locally compact groups are amenable,  \emph{Adv. Math.}
	\textbf{359} (2020), 106897, 25~pages.
	
	\bibitem[SW24]{SamWie24ExoticCAlgsGeomGps}
	\bgroup\scshape{}E.~Samei\egroup{} and \bgroup\scshape{}M.~Wiersma\egroup{},
	Exotic \ca{s} of geometric groups,  \emph{J. Funct. Anal.} \textbf{286}
	(2024), Paper No. 110228, 32~pages.
	
	\bibitem[Sca23]{Sca23DichotomyTopFullGps}
	\bgroup\scshape{}E.~Scarparo\egroup{}, A dichotomy for topological full groups,
	\emph{Canad. Math. Bull.} \textbf{66} (2023), 610--616.
	
	\bibitem[Tan23]{Tan23arX:SteinGpsTopFullGps}
	\bgroup\scshape{}O.~Tanner\egroup{}, Studying {S}tein's groups as topological
	full groups, preprint (arXiv:2312.07375), 2023.
	
\end{thebibliography}

\providecommand{\bysame}{\leavevmode\hbox to3em{\hrulefill}\thinspace}
\providecommand{\noopsort}[1]{}
\providecommand{\mr}[1]{\href{http://www.ams.org/mathscinet-getitem?mr=#1}{MR~#1}}
\providecommand{\zbl}[1]{\href{http://www.zentralblatt-math.org/zmath/en/search/?q=an:#1}{Zbl~#1}}
\providecommand{\jfm}[1]{\href{http://www.emis.de/cgi-bin/JFM-item?#1}{JFM~#1}}
\providecommand{\arxiv}[1]{\href{http://www.arxiv.org/abs/#1}{arXiv~#1}}
\providecommand{\doi}[1]{\url{http://dx.doi.org/#1}}
\providecommand{\MR}{\relax\ifhmode\unskip\space\fi MR }
\providecommand{\MRhref}[2]{%
	\href{http://www.ams.org/mathscinet-getitem?mr=#1}{#2}
}
\providecommand{\href}[2]{#2}

\end{document}